\author{Humara Khan }
\date{June 2022}
\newtheorem{thm}{Theorem}
\newtheorem{cor}[thm]{Corollary}
\newtheorem{lem}[thm]{Lemma}
\newtheorem{defn}[thm]{Definition}
\newtheorem{conj}[thm]{Conjecture}
\newtheorem{claim}[thm]{Claim}
\begin{document}
\author{Gyula Y. Katona}
\author{Humara Khan}
\affil{Department of Computer Science and Information Theory\\
	Budapest University of technology and Economics\\
Budapest, Hungary}

	\title
	{Minimally tough chordal graphs with toughness at most $1/2$}
	\maketitle

\begin{abstract}
	 Let $t$ be a positive real number. A graph is called \emph{$t$-tough} if the removal of any vertex set $S$ that disconnects the graph leaves at most $|S|/t$ components. The toughness of a graph is the largest $t$ for which the graph is $t$-tough. A graph is minimally $t$-tough if the toughness of the graph is $t$ and the deletion of any edge from the graph decreases the toughness. 	A graph is \emph{chordal} if it does not contain an induced cycle of length at least $4$.
	 We characterize the minimally $t$-tough, chordal graphs for all $t\le 1/2$. As a corollary, a characterization of  minimally $t$-tough, interval graphs is obtained for $t\le 1/2$.
\end{abstract}

	\section{Introduction}
	
	All graphs considered in this paper are finite, simple and
	undirected. Let $\omega(G)$ denote the number of components and
	$\alpha(G)$ denote the independence number and $\kappa(G)$ denote the
	connectivity number of the graph $G$. (Using $\omega(G)$ to denote the
	number of components may be confusing, however, most of the literature
	on toughness uses this notation.) For a connected graph $G$, a vertex set $S\subseteq V(G)$ is called a {\em cutset} if $\omega(G-S)>1$.
	
	The notion of toughness was introduced by Chv\'{a}tal in \cite{toughness_intro}.
	
	\begin{defn}
		Let $t$ be a  real number. A graph $G$ is called {\em $t$-tough} if $|S| \ge t \cdot \omega(G-S)$ for any cutset $S \subseteq V(G)$. The {\em toughness} of $G$, denoted by $\tau(G)$, is the largest $t$ for which G is $t$-tough, taking $\tau(K_n) = \infty$ for all $n \ge 1$.
		We say that a cutset $S \subseteq V(G)$ is a {\em tough set} if $\omega(G - S) = |S|/\tau(G)$.
	\end{defn}

Note that a graph is disconnected if and only if its toughness is $0$.
	
	\begin{defn}
		A graph $G$ is said to be {\em minimally $t$-tough} if $\tau(G) = t$ and $\tau(G - e) < t$ for all $e \in E(G)$.
	\end{defn}
	
	It follows directly from the definition that every $t$-tough noncomplete graph is $2t$-connected, implying $\kappa(G) \ge 2 \tau(G)$ for noncomplete graphs. Therefore, the minimum degree of any $t$-tough noncomplete graph is at least $\lceil 2t \rceil$. 
	
	The following conjecture is motivated by a theorem of Mader \cite{ende}, which states that every minimally $k$-connected graph has a vertex of degree $k$.
	
	\begin{conj}[Kriesell \cite{kriesell}] \label{kriesell1}
		Every minimally $1$-tough graph has a vertex of degree $2$.
	\end{conj}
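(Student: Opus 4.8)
The natural plan is to argue by contradiction. A minimally $1$-tough graph $G$ is connected and noncomplete (since $\tau(K_n)=\infty$), hence $2$-connected as observed above, so $\delta(G)\ge 2$; assume toward a contradiction that $\delta(G)\ge 3$. The first step is to read off, from minimality, a local witness for every edge. Fix $e=uv\in E(G)$. Since $\tau(G-e)<1$ there is a cutset $T$ of $G-e$ with $\omega((G-e)-T)\ge |T|+1$; as $G$ has no bridge, $|T|\ge 1$. Removing the single edge $e$ changes the number of components by at most one, so from $\tau(G)=1$ we get $\omega(G-T)=|T|$, that is, $T$ is a tough set of $G$; moreover $u,v\notin T$, the vertices $u$ and $v$ lie in distinct components $C_u\ni u$ and $C_v\ni v$ of $(G-e)-T$, and $e$ is the only edge of $G$ joining $C_u$ to $C_v$. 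Call such a $T$ a \emph{witness} for $e$. Two immediate consequences: if $\{w\}$ is a one-vertex component of $G-T$ then $N_G(w)\subseteq T$, so $\deg_G(w)\le |T|$; and since $G$ is $2$-connected, a witness that is a genuine cutset of $G$ has $|T|\ge 2$ (the borderline case $\omega(G-T)=1$ forces $|T|=1$ and needs a short separate argument).

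The second step is an extremal/local analysis at a vertex of minimum degree. Let $\deg_G(v)=\delta(G)=d\ge 3$ with $N_G(v)=\{u_1,\dots,u_d\}$, and choose witnesses $T_1,\dots,T_d$ for the edges $vu_1,\dots,vu_d$. One would pick $v$ and the witnesses as extremally as possible — minimizing $\sum_i |T_i|$, or the size of the smaller side $C_v$ of some witness, or taking a smallest tough set of $G$ — and then exploit the rigidity this forces: a small component of some $G-T_i$ consists of vertices whose neighbourhoods are squeezed into $T_i$, and $\delta(G)\ge 3$ then forces many edges from that component into $T_i$, hence forces $T_i$ to be large relative to the number of components it produces, clashing with the equality $\omega(G-T_i)=|T_i|$.

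The third step is to turn these local constraints into a global counting contradiction: double-count pairs (component of some $G-T_i$, vertex of $G$), or sum $\deg_G$ over the vertices lying in small components of the various $G-T_i$, playing the cap $\omega(G-T_i)=|T_i|$ coming from $1$-toughness against the lower bounds on $|T_i|$ coming from $\delta(G)\ge 3$, and compare the outcome with $\sum_{w\in V(G)}\deg_G(w)=2|E(G)|$.

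The step I expect to be the genuine obstacle is the second one, and it is the reason the conjecture is still open in general: unlike Mader's theorem for minimally $k$-connected graphs, the toughness function carries no usable submodularity, so witnesses for different edges need not ``uncross'' — two tough sets may cross with no tough set contained in their union or their intersection — and there is no clean mechanism forcing the $T_i$ into a controlled configuration around $v$. A realistic intermediate target is to run this scheme under structural hypotheses — chordality, claw-freeness, bounded independence number — that constrain the component structure of $G-T$ enough to stand in for the missing submodularity; restricting to such graph classes is exactly the viewpoint adopted in the rest of this paper.
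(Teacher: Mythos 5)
The statement you were asked about is stated in the paper as a \emph{conjecture} (Kriesell's Conjecture), not a theorem: the paper gives no proof of it, and to the best of current knowledge it is open even for $t=1$. So there is no ``paper's own proof'' to compare against, and your proposal is not a proof either --- as you yourself say explicitly, the second step is ``the genuine obstacle'' and you do not close it. Concretely, the gap is exactly where you locate it: from minimality you can extract, for each edge $e=uv$, a tough set $T_e$ separating $u$ from $v$ in $G-e$ (your step one is essentially sound, modulo the borderline case $\omega(G-T)=1$ you flag), but there is no known uncrossing or submodularity for tough sets that would force the witnesses $T_1,\dots,T_d$ around a minimum-degree vertex into a configuration tight enough to contradict $\delta(G)\ge 3$ via counting. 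Your step three is therefore a counting argument whose inputs are never supplied.

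What the paper actually does is consistent with the ``realistic intermediate target'' you name at the end: it does not attack Conjecture~\ref{kriesell1} directly, but proves the Generalized Kriesell Conjecture for a restricted class by a complete structural characterization. For chordal graphs with $\tau(G)=t\le 1/2$, clique trees supply precisely the rigidity your step two is missing: Lemmas~\ref{lem:aa} and~\ref{lem:4} use Theorem~\ref{th:condition} to rule out any clique-tree edge of weight $\ge 2$ and any clique of size $\ge 4$, Lemma~\ref{claim1} then forces every vertex to be simplicial or a cut-vertex, and Lemma~\ref{lem:mod} computes the toughness as $1/\mu(G)$, yielding Theorem~\ref{thm:main} (the TT-graph characterization), from which the degree-$\lceil 2t\rceil=1$ vertex falls out via Theorem~\ref{simplicial}. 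So your diagnosis of why the general conjecture resists this scheme is accurate, but the proposal should not be presented as a proof of the conjecture; at best it motivates the class restriction the paper adopts.
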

	
	This conjecture can be naturally generalized.
	
	\begin{conj}[Generalized Kriesell's Conjecture] \label{kriesell}
		Every minimally $t$-tough graph has a vertex of degree $\lceil 2t \rceil$.
	\end{conj}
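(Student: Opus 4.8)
The plan is to argue by contradiction. Assume $G$ is minimally $t$-tough yet every vertex has degree at least $\lceil 2t\rceil+1$. Since any noncomplete $t$-tough graph satisfies $\kappa(G)\ge 2\tau(G)=2t$, we already know $\delta(G)\ge\lceil 2t\rceil$; the content of the statement is that this bound is attained, so the standing assumption $\delta(G)\ge\lceil 2t\rceil+1$ is what I would aim to refute. (The complete case is excluded since $\tau(K_n)=\infty$.)

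The first step is to translate edge-minimality into local structure. For each edge $e=uv$, the hypothesis $\tau(G-e)<t$ supplies a cutset $S_e$ of $G-e$ with $|S_e|<t\cdot\omega\bigl((G-e)-S_e\bigr)$, and one checks that necessarily $u,v\notin S_e$. Comparing with $\tau(G)=t$ forces $\omega\bigl((G-e)-S_e\bigr)=\omega(G-S_e)+1$ together with $|S_e|<t\bigl(\omega(G-S_e)+1\bigr)$; thus $S_e$ is \emph{almost} a tough set, and $e$ is a bridge of $G-S_e$ joining two components $A_e\ni u$ and $B_e\ni v$ of $(G-e)-S_e$. (The borderline situation where $S_e$ fails to be a cutset of $G$ is degenerate and forces $\omega(G-S_e)=1$, which is handled directly.) The payoff is purely local: every neighbour of $u$ other than $v$ lies in $S_e\cup A_e$, and symmetrically for $v$, so each witness cutset pins down where the neighbourhoods of $u$ and $v$ can sit.

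The second step is to convert this family of witnesses into a single global constraint. I would fix a tough set $S$ realising $\tau(G)=t$, examine the components of $G-S$, and try to isolate a vertex $x$ lying in a \emph{smallest} component, or on the boundary of $S$, all of whose incident edges admit one \emph{common} witness cutset. Summing the neighbourhood restrictions over the edges at $x$ and combining them with $\delta(G)\ge\lceil 2t\rceil+1$ should produce a set $S'$ with $|S'|<t\cdot\omega(G-S')$, contradicting $\tau(G)=t$; equivalently, it should exhibit an edge $e$ whose deletion creates no new bridge in the relevant cutset, so that $\tau(G-e)=t$, contradicting minimality.

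The main obstacle is precisely this globalisation, and it is the reason the conjecture is still open in full generality. The cutsets $S_e$ attached to different edges may be mutually incompatible, and there is no known mechanism for selecting one cutset, or a small consistent subfamily, that simultaneously witnesses minimality for enough edges incident to a single vertex to force its degree down to $\lceil 2t\rceil$. Overcoming this appears to require strong extra structure on $G$. This is exactly the route taken in the present paper: imposing chordality and $t\le 1/2$ collapses the target degree to $\lceil 2t\rceil=1$, and the absence of induced cycles of length at least $4$ rigidly constrains the possible cutsets, making the witness family tractable and the contradiction attainable in that restricted regime.
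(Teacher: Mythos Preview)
The statement you are attempting to prove is labelled a \emph{conjecture} in the paper, and the paper does not contain a proof of it in general. The paper's contribution is to establish the conjecture only in the very restricted setting of chordal graphs with $t\le 1/2$, and even then not directly: it proceeds by fully characterising such graphs as TT-graphs (via clique-tree arguments and Lemmas~\ref{lem:aa}, \ref{lem:4}, \ref{lem:tt}), from which the existence of a degree-$1$ vertex (since $\lceil 2t\rceil=1$) is an immediate by-product. There is therefore no ``paper's own proof'' of Conjecture~\ref{kriesell} to compare against.

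As for your proposal itself, it is not a proof but an honest sketch that explicitly identifies its own fatal gap. Your first step (extracting a witness cutset $S_e$ for each edge) is standard and correct. Your second step, however, is pure aspiration: you say you ``would'' find a vertex $x$ whose incident edges admit a common witness cutset and then derive a contradiction, but you give no mechanism for doing so, and you yourself concede in the final paragraph that no such mechanism is known and that this is precisely why the conjecture remains open. A proof cannot rest on a step whose feasibility is acknowledged to be unknown. In short, what you have written is a reasonable summary of why the problem is hard, not a proof of the statement.
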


\begin{defn}
	A graph is {\em chordal} if it does not contain an induced cycle of length at least $4$.
\end{defn}

A vertex $v$ of a graph $G$ is {\em simplicial} if its closed neighborhood $N[v]$ forms a clique in $G$.

It was already known that the conjecture is true for chordal graphs when $t \le 1$.

	\begin{thm}[\cite{specgraph}]\label{simplicial}
	Let $t \leq 1$ be a positive rational number. If $t \leq\frac{1}{2}$, then every simplicial vertex of any minimally $t$-tough, chordal graph has degree $1$.
	If $\frac{1}{2}< t \leq 1$, then there exist no minimally $t$-tough, chordal graphs.
\end{thm}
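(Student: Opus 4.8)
The plan is to deduce both parts from the single assertion: \emph{for every positive rational $t\le 1$, every minimally $t$-tough chordal graph $G$, and every simplicial vertex $v$ of $G$, one has $\deg_G(v)=1$}. This suffices. Indeed, $\tau(K_n)=\infty$ forces any minimally $t$-tough graph to be noncomplete; a noncomplete chordal graph has a simplicial vertex; and, as recalled in the introduction, a noncomplete $t$-tough graph satisfies $\delta(G)\ge\kappa(G)\ge\lceil 2t\rceil$. Hence for $t\le\frac12$ the assertion is exactly ``every simplicial vertex has degree $1$'', while for $\frac12<t\le 1$ it forces a simplicial vertex to have degree $1$ and at the same time degree $\ge 2$, so no minimally $t$-tough chordal graph exists.

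To prove the assertion I would argue by contradiction: suppose $v$ is simplicial with $d:=\deg_G(v)\ge 2$. Let $N(v)=\{u_1,\dots,u_d\}$, which is a clique, and set $e:=u_1u_2\in E(G)$. Since $e$ lies on the triangle $vu_1u_2$ it is not a bridge, so $G-e$ is connected, and it is noncomplete because $u_1,u_2$ are nonadjacent in it. The goal is to show $G-e$ is still $t$-tough, contradicting the minimal $t$-toughness of $G$. So let $S$ be any cutset of $G-e$; I must show $|S|\ge t\cdot\omega((G-e)-S)$. If $v\notin S$ or $\{u_1,u_2\}\cap S\neq\emptyset$, then $(G-e)-S$ and $G-S$ have the same components (the edge $e$ is either absent or shortcut by the path $u_1vu_2$), and since this common value is $\ge 2$ the $t$-toughness of $G$ gives $|S|\ge t\cdot\omega(G-S)=t\cdot\omega((G-e)-S)$. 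So I may assume $v\in S$ and $u_1,u_2\notin S$. Put $m:=\omega(G-S)$; since $e$ is a single edge, $\omega((G-e)-S)\in\{m,m+1\}$, and if it equals $m$ we again conclude directly from the $t$-toughness of $G$. So the only real case is $\omega((G-e)-S)=m+1$, i.e.\ $e$ is a bridge of $G-S$.

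In that case consider $S':=S\setminus\{v\}$. The neighbours of $v$ lie in the clique $N(v)$ and at least $u_1\notin S$, so the neighbours of $v$ surviving in $G-S'$ all lie in one component of $G-S$; hence $\omega(G-S')=m$. If $m\ge 2$, then $S'$ is a cutset of $G$, so $|S|-1=|S'|\ge tm$, whence $|S|\ge tm+1\ge tm+t=t(m+1)=t\cdot\omega((G-e)-S)$ using $t\le 1$ — done. The only remaining sub-case is $m=1$: then $\omega((G-e)-S)=2$ and $v\in S$ give $|S|\ge 1$, which settles it when $t\le\frac12$ since $2t\le 1$; and when $\frac12<t\le 1$ we have $\kappa(G)\ge 2$, so $G-v$ is connected and $e$ would be a bridge of $G-v$ --- impossible if $d\ge 3$ (the vertex $u_3$ gives a path $u_1u_3u_2$ in $G-v$ avoiding $e$), and if $d=2$ an analysis of the two sides of this bridge (using that $v$ is adjacent to exactly one vertex on each side, and that a side with $\ge 2$ vertices would make $u_1$ or $u_2$ a cut vertex of $G$, contradicting $\kappa(G)\ge 2$) forces $G=K_3$, contradicting $\tau(G)=t<\infty$. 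Hence $|S|\ge t\cdot\omega((G-e)-S)$ in all cases, so $G-e$ is $t$-tough, a contradiction; therefore $\deg_G(v)=1$. I expect the delicate point to be precisely this last sub-case ($m=1$ with $\frac12<t\le 1$); everything else is the bookkeeping that deleting a clique-edge $u_1u_2$ can split off at most one new component, which deleting $v$ from the cutset ``pays for'' exactly when $t\le 1$.
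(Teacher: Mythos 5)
The paper does not prove Theorem~\ref{simplicial} at all: it is imported from \cite{specgraph} and used as a black box, so there is no in-paper argument to compare yours against. Judged on its own, your proof is correct and self-contained. The reduction is sound: a minimally $t$-tough graph with $t\le 1$ is noncomplete (else $\tau=\infty$), a noncomplete chordal graph has a simplicial vertex, and $\delta(G)\ge\lceil 2t\rceil\ge 2$ for $\frac12<t\le 1$, so the single assertion ``every simplicial vertex has degree $1$'' does yield both halves of the statement. The core argument --- delete an edge $e=u_1u_2$ inside the clique $N(v)$ of a simplicial vertex $v$ of degree $\ge 2$ and show $G-e$ is still $t$-tough --- is the natural attack and all the case bookkeeping checks out: the only nontrivial cutsets are those with $v\in S$, $u_1,u_2\notin S$ and $e$ a bridge of $G-S$; there $\omega(G-S\setminus\{v\})=\omega(G-S)$ because $N(v)\setminus S$ is a nonempty clique, and $t\le 1$ pays for the one extra component. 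One line is worth adding in the final sub-case ($m=1$, $\frac12<t\le 1$): your deduction that $e$ is a bridge of $G-v$ presupposes $S=\{v\}$, which is legitimate only after observing that $|S|\ge 2$ already gives $|S|\ge 2\ge 2t=t\cdot\omega((G-e)-S)$; with that observation the reduction to $S=\{v\}$, and then the $d\ge 3$ and $d=2$ analyses (ending in $G=K_3$, impossible since $\tau(K_3)=\infty$), are all fine. This is a minor presentational omission, not a gap; as written, your proposal would serve as a complete proof of the cited theorem.
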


Our goal is to characterize minimally tough, chordal graphs with toughness at most $1/2$. This also gives a new proof
of the Generalized Kriesell's Conjecture for these graphs. In the proof, we will need the notion of clique trees, which we introduce now.

	Let ${\cal K}_G$ be the set of maximal cliques of a graph $G$. A \textit{clique tree} of $G$ is a tree $T$ with vertex set ${\cal K}_G$ such that it satisfies the following \textit{clique-intersection property}: For every pair of distinct cliques $K, K'\in {\cal K}_G$, the set $K\cap K'$ is contained in every clique on the path connecting $K$ and $K'$ in the tree \cite{t4,t5}.  We can also assign weights to the edges of a clique tree. If $K, K'\in {\cal K}_G$ are adjacent in the clique tree, then let $|K\cap K'|$ be the \textit{weight} of this edge. One can show that each weight is at least 1 (see \cite{t4}). 
	
It is also proved in \cite{t4}, that clique trees also satisfy the \textit{induced-subtree property}:  For every vertex $v\in V(G)$ the set of cliques containing $v$ 
induces a subtree of the clique tree $T$.
	\begin{thm}[\cite{t4}] \
			\begin{itemize} 
				\item 
		A connected graph $G$ is chordal if and only if there exists a tree $T=({\cal K}_G, {\cal E}_T)$ for which the clique intersection-property holds. 
		\item A connected graph $G$ is chordal if and only if there exists a tree $T=({\cal K}_G, {\cal E}_T)$ for which the induced-subtree property holds.
			\end{itemize}
	\end{thm}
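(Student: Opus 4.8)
The plan is to prove the chain of implications: (i) every connected chordal $G$ admits a tree on ${\cal K}_G$ with the clique-intersection property; (ii) any tree on ${\cal K}_G$ with the clique-intersection property also has the induced-subtree property; and (iii) if some tree on ${\cal K}_G$ has the induced-subtree property, then $G$ is chordal. These close a loop: $G$ chordal $\Rightarrow$ a clique-intersection tree exists $\Rightarrow$ an induced-subtree tree exists $\Rightarrow$ $G$ chordal, so all three properties are equivalent and both stated equivalences follow at once.

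For (i) I would induct on $|V(G)|$, the case $|V(G)|=1$ being trivial. In the inductive step, pick a simplicial vertex $v$ of $G$ (every chordal graph has one, a classical result of Dirac); then $G-v$ is again connected (because $N(v)$ is a clique, so $v$ cannot separate the graph) and chordal (being an induced subgraph), so by induction it has a clique tree $T'$ with the clique-intersection property. Choose a maximal clique $K$ of $G-v$ with $N(v)\subseteq K$. If $N(v)=K$, then ${\cal K}_G=({\cal K}_{G-v}\setminus\{K\})\cup\{N[v]\}$, and I would take $T$ to be $T'$ with the node $K$ relabelled $N[v]$; if $N(v)\subsetneq K$, then ${\cal K}_G={\cal K}_{G-v}\cup\{N[v]\}$, and I would take $T$ to be $T'$ with a new leaf $N[v]$ attached to $K$. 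In either case one must verify the clique-intersection property for $T$: for two maximal cliques $K_1,K_2$ of $G$ neither of which is $N[v]$ it is inherited from $T'$ (using $K_1\cap K_2\subseteq N(v)\subseteq N[v]$ whenever $N[v]$ lies on the $T$-path), while for the pair $N[v],K_2$ one has $N[v]\cap K_2=N(v)\cap K_2\subseteq K\cap K_2$, which lies in every clique on the $T'$-path from $K$ to $K_2$, hence on the $T$-path from $N[v]$ to $K_2$.

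Part (ii) is immediate: if maximal cliques $K$ and $K'$ both contain a vertex $v$, the clique-intersection property forces every clique on the $T$-path between them to contain $K\cap K'\ni v$, so the cliques containing $v$ induce a connected subgraph, i.e.\ a subtree, of $T$. For (iii), fix a tree $T$ on ${\cal K}_G$ with the induced-subtree property and, for $v\in V(G)$, let $T_v$ be the nonempty subtree of $T$ induced by the maximal cliques containing $v$. Two distinct vertices $v,w$ are adjacent in $G$ iff some maximal clique contains both iff $T_v\cap T_w\neq\emptyset$, so $G$ is the intersection graph of the subtree family $\{T_v\}_{v\in V(G)}$. To see such a graph is chordal, root $T$, let $\mathrm{top}(v)$ be the node of $T_v$ nearest the root, and order $V(G)$ as $v_1,\dots,v_n$ by non-increasing depth of $\mathrm{top}(v_i)$; if $i<j,l$ and $v_i$ is adjacent to both $v_j$ and $v_l$, then from any node of $T_{v_i}\cap T_{v_j}$ the path to the root meets $\mathrm{top}(v_i)$ no later than $\mathrm{top}(v_j)$, so $\mathrm{top}(v_i)\in T_{v_j}$, and likewise $\mathrm{top}(v_i)\in T_{v_l}$, whence $\mathrm{top}(v_i)\in T_{v_i}\cap T_{v_j}\cap T_{v_l}$ and $v_j\sim v_l$. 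Thus $v_1,\dots,v_n$ is a perfect elimination ordering, so $G$ is chordal. The main obstacle is part (i): keeping precise track of which maximal cliques of $G-v$ survive as maximal cliques of $G$, and carrying out the (routine but fiddly) re-verification that the surgery on $T'$ preserves the clique-intersection property for every pair of maximal cliques.
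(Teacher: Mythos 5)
The paper does not prove this theorem; it is imported verbatim from Blair and Peyton \cite{t4}, so there is no in-paper argument to compare against. Your proof is correct and is essentially the standard one from the literature: the simplicial-vertex induction for existence of a clique tree, the one-line implication from the clique-intersection property to the induced-subtree property, and the subtree-intersection representation together with a depth-ordered perfect elimination ordering for the converse. The two points that genuinely need care are exactly the ones you flag: in the inductive step you must verify that ${\cal K}_G$ is $({\cal K}_{G-v}\setminus\{K\})\cup\{N[v]\}$ or ${\cal K}_{G-v}\cup\{N[v]\}$ according as $N(v)=K$ or $N(v)\subsetneq K$ (which follows since any maximal clique of $G-v$ contained in $N(v)$ must equal $N(v)$), and in the last step you invoke without proof the standard fact that a perfect elimination ordering implies chordality; both are routine and your sketches of them are sound.
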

	In other words, a connected graph $G$ is chordal if and only if it has a clique tree.
	
		\begin{defn}\label{def:TT} We  call a graph a \emph{TT-graph} if it can be obtained 
		 from a tree of maximum degree $\Delta\geq3$ in the following way. Let $Y$ be a subset of vertices satisfying one of the following conditions:		 
		 \begin{enumerate}
		 	\item[(a)] if $\Delta=3$, then $Y$ is the set of all degree 3 vertices so that $Y$ is an independent set and every neighbor of a vertex in $Y$ has degree $2$, or
		 	\item[(b)] if $\Delta\ge 3$, then $Y$ is a subset of some (or all, or none) of the degree 3 vertices so that $Y$ is an independent set and  every neighbor of a vertex in $Y$ has degree $\Delta$.
		 \end{enumerate} 
	 Now remove all vertices of $Y$ (in one step) and for each removed vertex join their three neighbors with a triangle.
	\end{defn}
		\tikzstyle{vertex}=[circle, fill=black,
		minimum size=4pt,inner sep=1pt]
	\tikzstyle{edge} = [draw,very thin,-]
	\tikzstyle{edge} = [draw,ultra thick,-]
	\begin{figure}
		\begin{tikzpicture}[scale=0.42, auto,swap]
			\foreach \pos/ \name in {{(0,3)/a}, {(2.25,1.75)/b}, {(4,1)/c}, {(2.25,3)/h},
				{(0.,1)/d}, {(-1.25,0.25)/e}, {(0,-0.5)/f}, {(6,2)/g}, {(0.5,4.25)/i}, {(-1.25,3.75)/j}, {(4,3)/k},{(3,-0.5)/l},{(8,1)/m},{(8,-0.75)/n},{(8,3)/o},{(6,0.75)/p},{(3,4.5)/q}, {(5,4.5)/r}, {(7,4.5)/s},{(9,4.5)/t},{(9.75,0.5)/u}, {(0.85,1.85)/v},{(4.75,2)/w},{(7.25,2)/x}, {(8,4.5)/y}, {(9,-0.25)/z},{(6,3.25)/a'},{(4,4.5)/b'}, {(2.25,0.5)/c'},{(3.25,2.25)/d'},{(-0.25,4.25)/e'},
				{(-1.25,5)/g'},{(-1,-0.25)/f'},{(-0.25,5.5)/h'},{(3.25,5.5)/i'},{(9.5,5.5)/j'},{(9.75,2.5)/k'},{(11,2.5)/l'},{(8.25,5.5)/m'},{(8.75,1.75)/n'},{(10.25,1.25)/o'},{(9.25,3.5)/p'},{(10.5,4)/q'},{(11,5.5)/r'},{(11.75,4.5)/s'},{(11.5,3.25)/t'}}
			\node[vertex] (\name) at \pos {};
			\tikzstyle{edge} = [draw,ultra thick,-]
			\foreach \source/ \dest in {a/v, b/v, c/w,d/v,o/x,k/w,g/w,g/x,m/x}
			\path[edge] (\source) --  (\dest);
			\tikzstyle{edge} = [draw,very thin,-]
			\foreach \source/ \dest in
			{a/e',b/d',c/c',c/l,c/b,d/f,d/f',e/d,i/a,j/a,g/p,g/a',h/b,m/n,m/z,m/u,k/q,k/r,k/b',o/s,o/t,o/y,t/j',t/m',k'/n',k'/l',k'/o',b'/i',e'/g',e'/h',p'/k',p'/t,p'/q',q'/r',q'/s',q'/t'}
			\path[edge] (\source) --  (\dest);
			\tikzstyle{arrow} = [thin,->,>=stealth]
			\draw[arrow] (12.5,2) -- (14.5,2);
		\end{tikzpicture}
		\begin{tikzpicture}[scale=0.42, auto,swap]
			\foreach \pos/ \name in  {{(0.25,2.75)/a}, {(2.25,1.75)/b}, {(4,1)/c}, {(2.25,3)/h},
				{(0.5,1.15)/d}, {(-0.75,0.25)/e}, {(0.5,-0.5)/f}, {(6,2)/g}, {(0.5,4.25)/i}, {(-1.25,3.75)/j}, {(4,3)/k},{(3,-0.5)/l},{(8,1)/m},{(8,-0.75)/n},{(8,3)/o},{(6,0.75)/p},{(3,4.5)/q}, {(5,4.5)/r}, {(7,4.5)/s},{(9,4.5)/t},{(9.75,0.5)/u}, {(8,4.5)/y}, {(9,-0.25)/z},{(6,3.25)/a'},{(4,4.5)/b'}, {(2.25,0.5)/c'},{(3.25,2.25)/d'},{(-0.25,4.25)/e'},
				{(-1.25,5)/g'},{(-0.5,-0.25)/f'},{(-0.25,5.5)/h'},{(3.25,5.5)/i'},{(9.5,5.5)/j'},{(9.75,2.5)/k'},{(11,2.5)/l'},{(8.25,5.5)/m'},{(8.75,1.75)/n'},{(10.25,1.25)/o'},{(9.25,3.5)/p'},{(10.5,4)/q'},{(11,5.5)/r'},{(11.75,4.5)/s'},{(11.5,3.25)/t'}}
			\node[vertex] (\name) at \pos {};
			\tikzstyle{edge} = [draw,ultra thick,-]
		\foreach \source/ \dest in {a/d, d/b, a/b, k/c,c/g,k/g,g/o,o/m,g/m}
		\path[edge] (\source) --  (\dest);
			\tikzstyle{edge} = [draw, very thin,-]
			\foreach \source/ \dest in { a/e',b/a,b/d',c/c',c/b,c/g,c/k,c/l,d/a,d/b,d/f',e/d,f/d,h/b, i/a, j/a,g/k,g/o,g/p,g/m,g/a',m/n,m/o,m/u,m/z,k/b',k/q,k/r,o/s,o/t,o/y,t/j',t/m',k'/n',k'/l',k'/o',b'/i',e'/g',e'/h',p'/k',p'/t,p'/q',q'/r',q'/s',q'/t'}
			\path[edge] (\source) --  (\dest);
		\end{tikzpicture}
		\caption{Creating a TT-graph from a tree by Definition \ref{def:TT}}
		\label{fig:}
	\end{figure}
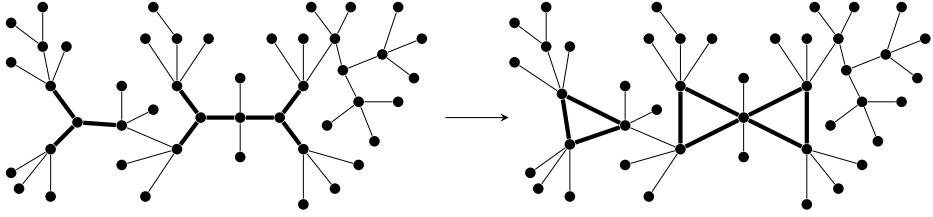
     
     Notice that a TT-graph $G$ may contain triangles, but there is no larger clique in it. Also, any two triangles may only have one common vertex. Trees are also TT-graphs. It is also obvious that TT-graphs are chordal graphs. For TT-graphs we define the \emph{modified degree} for each vertex: $md(v)$ is the number of components in $G-v$. This is clearly the same as $d_G(v)$ minus the number of triangles containing $v$. If a TT-graph $G$ is constructed from a tree $T$, then $md_G(v)=d_T(v)$. Furthermore, it follows from the construction method that all vertices contained in a triangle must have maximal modified degree. Let $\mu(G)$ denote the maximum modified degree in $G$.
	
	During the proof we will use the following general theorem to characterize those graphs that are not minimally $t$-tough.

\begin{thm}[\cite{t3}]\label{th:condition}
	Let $G$ be a connected graph that is not complete and let $t=\tau(G)$.
	Then $G$ is not minimally $t$-tough if and only if $G$ contains an edge $e= uv$ such that the following conditions are met.
	\begin{enumerate}[(a)]		
		\item\label{cond:a} There exist at least $2t + 1$ internally vertex-disjoint $u$-$v$ path in $G$ (including $uv$).
		\item\label{cond:b} Every cutset $S$ in $G$ that is also a $u$-$v$ cutset in $G - e$ satisfies
		$$	|S| \geq (\omega(G - S) + 1)t.$$ 
	\end{enumerate}
\end{thm}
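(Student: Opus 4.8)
The plan is to prove both directions of the equivalence by translating the toughness statements into statements about the components of $G-S$ and $(G-e)-S$ for cutsets $S$, and by using Menger's theorem to handle condition~(a). First I would record two preliminary remarks. Deleting an edge never decreases the number of components of a vertex-deleted subgraph, so for every cutset $S$ of $G$ one has $\omega((G-e)-S)\ge\omega(G-S)$; hence $S$ remains a cutset of $G-e$, and therefore $\tau(G-e)\le\tau(G)=t$ for every $e$. Since $t=\tau(G)$ is fixed, ``$G$ is not minimally $t$-tough'' means precisely that some edge $e$ satisfies $\tau(G-e)=t$, that is, $G-e$ is still $t$-tough; note such a $G-e$ is connected (since $t>0$) and noncomplete (since $G$ is). Secondly, for an edge $e=uv$ the vertices $u$ and $v$ are nonadjacent in $G-e$, so Menger's theorem gives that the maximum number of internally vertex-disjoint $u$-$v$ paths in $G$ equals $1$ plus the minimum size of a $u$-$v$ separator of $G-e$; hence condition~(a) is equivalent to the statement that every $u$-$v$ separator of $G-e$ has at least $2t$ vertices.

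For the forward direction, suppose $G-e$ is $t$-tough with $e=uv$. Since $G-e$ is $t$-tough and noncomplete, $\kappa(G-e)\ge 2t$, so every $u$-$v$ separator of $G-e$, being a cutset of $G-e$, has size at least $2t$; by the second remark this gives condition~(a). For condition~(b), let $S$ be a cutset of $G$ that is also a $u$-$v$ cutset of $G-e$. Then $u,v\notin S$, and the edge $e$ joins two vertices that lie in the same component of $G-S$ but in different components of $(G-e)-S$; therefore deleting $e$ splits exactly one component of $G-S$ in two, so $\omega((G-e)-S)=\omega(G-S)+1$. As $G-e$ is $t$-tough and $S$ is a cutset of $G-e$, we obtain $|S|\ge t\,\omega((G-e)-S)=(\omega(G-S)+1)t$, which is condition~(b).

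For the reverse direction, assume $e=uv$ satisfies (a) and (b). By (a) there is a $u$-$v$ path other than $e$, so $e$ is not a bridge and $G-e$ is connected; it remains to show $G-e$ is $t$-tough. Let $S$ be an arbitrary cutset of $G-e$. If $S$ is also a cutset of $G$, compare $\omega((G-e)-S)$ with $\omega(G-S)$: if they are equal, then $t$-toughness of $G$ gives $|S|\ge t\,\omega(G-S)=t\,\omega((G-e)-S)$; otherwise $\omega((G-e)-S)=\omega(G-S)+1$, which forces $S$ to separate $u$ from $v$ in $G-e$, so by (b) we get $|S|\ge(\omega(G-S)+1)t=t\,\omega((G-e)-S)$. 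If $S$ is not a cutset of $G$, then $G-S$ is connected whereas $(G-S)-e=(G-e)-S$ is not, so $e$ is a bridge of $G-S$ (in particular $u,v\notin S$) and $\omega((G-e)-S)=2$; moreover $S$ separates $u$ from $v$ in $G-e$, so each of the $\ge 2t$ internally vertex-disjoint $u$-$v$ paths other than $e$, being a $u$-$v$ path of $G-e$, meets $S$ in an internal vertex, and since these paths are pairwise internally disjoint they account for $\ge 2t$ distinct vertices of $S$, whence $|S|\ge 2t=t\,\omega((G-e)-S)$. In every case $|S|\ge t\,\omega((G-e)-S)$, so $G-e$ is $t$-tough. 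The crux of the argument --- and the only place where condition~(a) is used --- is this last case, in which $S$ fails to be a cutset of $G$: one must observe that $(G-e)-S$ then has exactly two components and extract the bound $|S|\ge 2t$ from the disjoint-paths hypothesis via Menger. That $2t$ need not be an integer causes no difficulty, since the relevant path counts satisfy the required inequalities already over the reals.
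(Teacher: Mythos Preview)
The paper does not supply its own proof of this theorem: it is quoted from the external manuscript~\cite{t3} and used as a black box in the subsequent lemmas. So there is nothing in the paper to compare your argument against.

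That said, your proof is correct and is the natural one. The two preliminary observations are exactly what is needed: first, that failure of minimal $t$-toughness (given $\tau(G)=t$) is equivalent to the existence of an edge $e$ with $\tau(G-e)=t$; second, the Menger reformulation of condition~(a) as a lower bound on the size of $u$-$v$ separators in $G-e$. In the forward direction your use of $\kappa(G-e)\ge 2t$ for (a) and of $\omega((G-e)-S)=\omega(G-S)+1$ for (b) is clean and accurate. In the reverse direction your three-way case split on cutsets $S$ of $G-e$ (cutset of $G$ with unchanged component count; cutset of $G$ with component count increased by one; not a cutset of $G$) is exhaustive, and in each case the required inequality follows immediately from $t$-toughness of $G$, from (b), or from (a) respectively. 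The remark that integrality of path counts absorbs the possibly non-integer value $2t$ is the right way to dispose of that technicality.

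One cosmetic point: in the last case you re-derive $|S|\ge 2t$ by counting internal vertices on the disjoint paths, but you had already recorded the equivalent separator formulation of~(a), so you could simply invoke that $S$ is a $u$-$v$ separator of $G-e$ and conclude $|S|\ge 2t$ directly. Either way the argument is complete.
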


Our main result is  a characterization of chordal graphs with toughness $t\le 1/2$. 

\begin{thm}\label{thm:main}
	For  any positive rational number $t\le 1/2$, a chordal graph is minimally $t$-tough if and only if it is a TT-graph with toughness $t=1/\mu(G)$.
\end{thm}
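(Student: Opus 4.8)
The plan is to prove the two implications of Theorem~\ref{thm:main} separately; the ``if'' direction is short, and the ``only if'' direction carries the weight. For the \emph{if} direction, let $G$ be a TT-graph built from a tree $T$; it is connected, chordal and noncomplete, so $\mu(G)\ge 2$. I would first show $\tau(G)=1/\mu(G)$: the inequality $\le$ comes from deleting one vertex of maximum modified degree, and for $\ge$ note that for any $S\subseteq V(G)$ one has $\omega(G-S)\le\omega(T-S)$ (re-expanding each triangle to its apex $y$ can only merge components or leave $y$ isolated), while $T$ is a tree in which every vertex of $V(G)$ has $T$-degree $d_T(v)=md_G(v)\le\mu(G)$, so $\omega(G-S)\le\omega(T-S)\le 1+|S|(\mu(G)-1)\le|S|\mu(G)$. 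For minimality I would split by edge type. A non-triangle edge of $G$ is a surviving edge of $T$ and is in fact a bridge of $G$ (the triangles never straddle a cut of $T$), so deleting it sends the toughness to $0$. For a triangle edge $uv$ with third vertex $w$: in $G-uv-w$ the vertices $u,v$ are separated, since a $u$--$v$ path there would translate --- replacing each used triangle edge $pq$ by the length-two path through its apex --- into a $u$--$v$ walk in $T$ avoiding both $w$ and the apex of $uvw$, contradicting that the unique $u$--$v$ path of $T$ runs through that apex. Hence $\omega((G-uv)-w)=\omega(G-w)+1=\mu(G)+1$ (a triangle vertex has maximum modified degree), so $\tau(G-uv)\le 1/(\mu(G)+1)<\tau(G)$, which gives minimality; and $t=1/\mu(G)\le 1/2$.

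For the \emph{only if} direction, let $G$ be chordal, minimally $t$-tough, $t\le 1/2$. Then $G$ is connected and noncomplete; by Theorem~\ref{simplicial} every simplicial vertex has degree $1$, so every leaf clique of a clique tree of $G$ is an edge attached with weight $1$, whence $\kappa(G)=1$. The key reduction is that, because $t\le 1/2$, condition~(a) of Theorem~\ref{th:condition} for an edge $uv$ simply says that $uv$ lies on a cycle, which by chordality means $uv$ lies in a triangle. Applying Theorem~\ref{th:condition} contrapositively (since $G$ \emph{is} minimally $t$-tough) therefore tells us exactly that every triangle edge $uv$ admits a cutset $S$ separating $u$ from $v$ in $G-uv$ with $|S|<(\omega(G-S)+1)t$. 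For such an $S$ one checks at once that the third triangle vertex $w$ lies in $S$, that $\omega((G-uv)-S)=\omega(G-S)+1$, and, with $m=\omega(G-S)$, that $tm\le|S|<tm+t$; since $t\le 1/2$ this last relation pins $|S|$ down to within $t$ of a tough set. Playing this against $t$-toughness of $G$ --- deleting a carefully chosen vertex of $S$, or adjoining one, and bounding the new component count --- is the engine behind every structural step.

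Running this engine I would prove, in order: (i) $G$ has no $K_4$ --- if $\{a,b,c,d\}$ were a clique, a bad cutset $S$ for the edge $ab$ must contain $c$ and $d$, and deleting $c$, then $d$, then both from $S$ and each time testing $t$-toughness against $|S|<tm+t$ forces a contradiction with $t\le 1/2$; so all maximal cliques are edges or triangles; (ii) two triangles meet in at most one vertex (a shared edge would recreate a $K_4$); (iii) ``uncontracting'' each triangle $\{p,q,r\}$ to a new vertex $y$ adjacent to $p,q,r$ (deleting $pq,qr,rp$) yields a connected graph $T$ with $|E(T)|=|E(G)|$ and $|V(T)|=|V(G)|+k$, where $k$ is the number of triangles, and $T$ is acyclic --- a cycle in $T$ would translate back (as above) to a cyclic chain of triangles in $G$, and in such a chain some triangle edge could be deleted without lowering the toughness, contradicting minimality --- so $T$ is a tree; (iv) with $Y$ the set of new vertices, $(T,Y)$ satisfies Definition~\ref{def:TT}: the vertices of $Y$ have $T$-degree $3$ and are automatically independent, and the degree conditions on their neighbors are forced, because if a neighbor $p$ of some $y\in Y$ had the wrong $T$-degree then deleting the triangle edge at $p$ would not lower the toughness, making that edge satisfy conditions~(a) and~(b) of Theorem~\ref{th:condition}; the two cases of Definition~\ref{def:TT} correspond to $\Delta(T)=3$ versus $\Delta(T)>3$. (If $G$ has no triangle it is a tree, already a TT-graph.) Finally, since $G$ is now known to be a TT-graph, the toughness formula from the ``if'' direction gives $t=\tau(G)=1/\mu(G)$.

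The main obstacle is precisely steps~(i)--(iv): converting ``minimally $t$-tough'' into this clean local picture (no $K_4$; triangles pairwise sharing at most a vertex; tree-like arrangement; correct neighbor-degrees in $T$). Each step is a proof by contradiction that manufactures, from a forbidden configuration, an edge satisfying both conditions of Theorem~\ref{th:condition}; the delicate part is choosing the right edge and the right cutset surgery and verifying that $tm\le|S|<tm+t$ together with $t\le 1/2$ really produces a contradiction in every branch of the case analysis --- this bookkeeping, rather than any single clever idea, is what makes the argument long.
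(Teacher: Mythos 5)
Your ``if'' direction is sound and essentially matches the paper's: the paper gets $\tau(G)=1/\mu(G)$ by showing every tough set has size one rather than by your comparison of $\omega(G-S)$ with $\omega(T-S)$, but both work, and your minimality argument (bridges, and the third triangle vertex acquiring modified degree $\mu(G)+1$) is exactly the paper's. The genuine gap is in the ``only if'' direction, at your step (ii). Two triangles sharing an edge do \emph{not} recreate a $K_4$: they form $K_4$ minus an edge (a diamond), which is chordal and $K_4$-free, so your step (i) does not exclude it. This configuration --- equivalently, two maximal cliques meeting in at least two vertices, i.e.\ a clique-tree edge of weight at least $2$ --- is precisely the case on which the paper spends its longest and most delicate argument (Lemma~\ref{lem:aa}). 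There one takes the shared edge $e=uv$ and verifies condition \ref{cond:b} of Theorem~\ref{th:condition} for it, which requires a nontrivial structural claim proved from chordality (a vertex $x\in Q_1\setminus Q_2$ either has all its neighbours inside $Q_1$ or is a cut-vertex separating $u$ from some of its neighbours), followed by a two-case comparison of $\omega(G-S)$ with $\omega(G-(S\setminus\{x\}))$. Nothing in your outline supplies this.

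The gap propagates. If a diamond survives, your uncontraction in step (iii) produces a $4$-cycle $p\,y_1\,q\,y_2\,p$ in $T$, and your justification that $T$ is acyclic --- ``some triangle edge could be deleted without lowering the toughness'' --- is exactly the unproven assertion above, not a consequence of anything you have established. A similar caveat applies to step (i) itself: to extract a contradiction from $tm\le|S|<tm+t$ after removing $c$ or $d$ from $S$, you must bound how many new components the deletion of that single vertex creates, and that bound rests on the same chordality claim about the neighbourhood of a vertex of $Q\setminus\{u,v\}$; the ``bookkeeping'' you defer is where the actual mathematics lives (the paper's Lemma~\ref{lem:4} explicitly reuses the machinery of Lemma~\ref{lem:aa} for this). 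Once the diamond and $K_4$ cases are genuinely settled, your steps (iii)--(iv) are workable and parallel the paper's Lemma~\ref{lem:tt}.
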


The proof is divided to several lemmas. Each lemma will cover a subset of the chordal graphs depending on some properties of their clique trees.

\section{Proof}

In the rest of the paper we assume that $t\le 1/2$ is a positive rational number.

\begin{lem}\label{claim1}
	If $G$ is a chordal graph such that in every clique tree, all the edges have weight $1$, then every vertex of $G$ is either a simplicial vertex or  a cut-vertex.
\end{lem}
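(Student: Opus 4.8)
The plan is to recast the conclusion in terms of the clique tree and then invoke the standard fact that every edge of a clique tree corresponds to a minimal vertex separator of $G$. We may assume $G$ is connected, since the clique-intersection property requires a connected graph. First recall the dictionary between simpliciality and the clique tree: a vertex $v$ is simplicial if and only if $N[v]$ is a clique if and only if $v$ lies in exactly one maximal clique of $G$. Indeed, if $v\in K\cap K'$ for two distinct maximal cliques, then every vertex of $K'$ is a neighbour of $v$ or equals $v$, so $K'\subseteq N[v]$; if moreover $N[v]$ were a clique it would lie in some maximal clique $K''$, whence $K'\subseteq N[v]\subseteq K''$ and similarly $K\subseteq K''$, forcing $K=K'=K''$, a contradiction. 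Conversely, if $v$ lies in a unique maximal clique $K$, then every edge at $v$ lies in some maximal clique containing $v$, hence in $K$, so $N[v]\subseteq K$ is a clique. By the induced-subtree property this says: $v$ is simplicial if and only if the subtree $T_v$ of the clique tree induced by the cliques containing $v$ consists of a single node. So it suffices to prove that every vertex lying in at least two maximal cliques is a cut-vertex.

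Next I would isolate the separation property of clique-tree edges. Fix a clique tree $T$ of $G$ and an edge $KK'$ of $T$; deleting it splits $T$ into subtrees $T_1\ni K$ and $T_2\ni K'$, and set $V_i=\bigcup_{C\in T_i}C$ for $i=1,2$. The induced-subtree property gives $V_1\cap V_2=K\cap K'$: the inclusion $\supseteq$ is immediate, and if $w\in V_1\cap V_2$ then $T_w$ meets both $T_1$ and $T_2$, hence contains the path through the edge $KK'$, so $w\in K\cap K'$. Moreover $K\setminus K'$ is nonempty (distinct maximal cliques are incomparable) and contained in $V_1\setminus V_2$, and symmetrically $V_2\setminus V_1\neq\emptyset$; and there is no edge of $G$ between $V_1\setminus V_2$ and $V_2\setminus V_1$, since any edge of $G$ lies inside some maximal clique, which belongs entirely to $T_1$ or entirely to $T_2$. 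Hence $K\cap K'$ is a cutset of $G$, separating $V_1\setminus V_2$ from $V_2\setminus V_1$.

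Finally, combine the two steps. Let $v$ be a non-simplicial vertex and fix any clique tree $T$ of $G$; by the first step $T_v$ has at least two nodes, hence contains an edge $KK'$ of $T$, and $v\in K\cap K'$. By hypothesis this edge has weight $|K\cap K'|=1$, so $K\cap K'=\{v\}$, and by the second step $\{v\}$ is a cutset of $G$, i.e.\ $v$ is a cut-vertex; every remaining vertex is simplicial. I do not expect a genuine obstacle here: the only content beyond the definitions is the separation property of the second step, which is a routine consequence of the clique-intersection and induced-subtree properties and is already implicit in the cited clique-tree literature. The minor points requiring care are writing that step cleanly and noting the degenerate case $G=K_n$, where the clique tree has no edges and every vertex is simplicial, so the statement holds vacuously.
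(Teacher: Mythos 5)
Your proof is correct, and it takes a genuinely different route from the paper's. The paper first deduces from the clique-intersection property that any two maximal cliques meet in at most one vertex, then argues by contradiction: if a vertex $v$ lying in two maximal cliques $Q_1,Q_2$ were not a cut-vertex, a shortest path from $Q_1-Q_2$ to $Q_2-Q_1$ avoiding $v$ would close up with $xvy$ into a cycle of length at least $4$, and chordality would force a triangle whose maximal clique meets $Q_1$ in at least two vertices --- a contradiction. You instead prove and invoke the standard separator property of clique-tree edges: for any edge $KK'$ of the tree, $K\cap K'$ separates $V_1\setminus V_2$ from $V_2\setminus V_1$, which follows cleanly from the induced-subtree property. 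Combined with the (correctly proved) equivalence between simpliciality and lying in a unique maximal clique, a non-simplicial $v$ yields a tree edge of $T_v$ whose weight-one intersection is exactly $\{v\}$, so $v$ is a cut-vertex. Your argument is shorter once the separator property is in hand, concentrates all the chordality input into that single structural fact, and in fact shows slightly more (that $\{v\}$ is a minimal separator realized by a clique-tree edge); the paper's argument avoids proving the separator property and stays at the level of paths and chords, at the cost of a more delicate shortest-path case analysis. You also correctly flag the connectivity assumption and the degenerate complete case, both of which the paper leaves implicit.
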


\begin{proof}
	Notice that the clique intersection property implies that if there are two maximal cliques with intersection of size at least $2$, then the clique tree will have an edge of weight at least $2$ even if the corresponding vertices are not adjacent in the clique tree. This would contradict the assumption of the lemma. 
	
	Clearly, every vertex of the graph must be contained in a maximal clique. If a vertex $v$ is contained in only one, then it is a simplicial vertex. So assume that $v$ is contained in at least two maximal cliques, say $Q_1$ and $Q_2$. The assumption implies that $|Q_1\cap Q_2|= 1$. Now we show that $v$ is a cut-vertex.
	
	Suppose to the contrary that it is not, then for any vertex $x\in Q_1-Q_2$ and any vertex $y\in Q_2-Q_1$ there is a path $P$ connecting $x$ and $y$ and avoiding $v$, choose a shortest such path.   If $xy\in E(G)$, then  the intersection of the maximal clique containing the triangle $vxy$ and $Q_1$  has size at least $2$, a contradiction. If $xy\notin E(G)$,	then the path $P$ with the path $xvy$ forms a cycle of length at least $4$. Since $G$ is chordal, this cycle must be triangulated. Moreover, since we chose a shortest path, $v$ must be connected to all the other vertices on the path $P$. Now if $x'y'$ is the first edge of the path leaving $Q_1$, then the intersection of the maximal clique containing the triangle $vx'y'$ and $Q_1$ has size at least $2$, a contradiction.
\end{proof}


\begin{lem}\label{lem:aa}
	If $G$ is a chordal graph with $\tau(G)=t\le 1/2$ which has a clique tree containing an edge of weight at least $2$, then $G$ is not  minimally $t$-tough.
\end{lem}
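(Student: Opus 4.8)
The plan is to apply Theorem \ref{th:condition}: since $\tau(G)=t\le 1/2$ and $G$ is chordal (hence not complete, as a complete graph has a single maximal clique and a trivial clique tree with no edges, let alone one of weight $\ge 2$), it suffices to exhibit an edge $e=uv$ satisfying conditions \eqref{cond:a} and \eqref{cond:b}. Condition \eqref{cond:a} requires at least $2t+1$ internally vertex-disjoint $u$--$v$ paths; since $t\le 1/2$ we have $2t+1\le 2$, so we only need \emph{two} internally disjoint $u$--$v$ paths, i.e.\ the edge $uv$ together with one more. This is where the weight-$\ge 2$ edge of the clique tree enters: if $K,K'$ are maximal cliques joined by an edge of weight $\ge 2$ in the clique tree, then $|K\cap K'|\ge 2$, so pick two distinct vertices $u,v\in K\cap K'$. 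Then $uv\in E(G)$ (both lie in the clique $K$), and a second internally disjoint $u$--$v$ path is obtained by routing through any vertex of $K\setminus\{u,v\}$ or $K'\setminus\{u,v\}$ — at least one of these cliques has a third vertex, since maximal cliques have size $\ge 2$ and if both $K,K'$ had size exactly $2$ they would be equal, contradicting that they are distinct cliques joined by a tree edge. So condition \eqref{cond:a} holds for $e=uv$.

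For condition \eqref{cond:b}, let $S$ be any cutset of $G$ that is also a $u$--$v$ cutset in $G-e$; I must show $|S|\ge (\omega(G-S)+1)t$. The key observation is that $S$ cannot separate $u$ from $v$ in $G$ itself (they are adjacent), so $u,v$ lie in the same component of $G-S$; moreover since $S$ separates them in $G-e$, neither $u$ nor $v$ is in $S$, and $S$ must destroy the alternate path we built, so $S$ contains a vertex of $\{K,K'\}\setminus\{u,v\}$. Now I use $\tau(G)=t$: for \emph{any} cutset we already have $|S|\ge t\cdot\omega(G-S)$; I need to upgrade this by one unit of $\omega$. The idea is that $S\cup\{u\}$ (or $S\cup\{v\}$) is again a cutset, and it separates the component containing $u,v$ into at least two pieces — specifically, because $S$ was a $u$--$v$ cutset in $G-e$, removing $u$ as well disconnects $v$'s side from whatever was on $u$'s side along the alternate routes, giving $\omega(G-S-u)\ge \omega(G-S)+1$. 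Applying $\tau(G)=t$ to the cutset $S\cup\{u\}$ yields $|S|+1\ge t(\omega(G-S)+1)$, and since $t\le 1/2$ and $\omega(G-S)\ge 2$ forces $t(\omega(G-S)+1)$ to exceed $1$ appropriately, a short arithmetic check (using that $t\le 1/2$ so $|S|+1 \le |S| + 2t$ is \emph{not} quite what we want — rather we argue $|S|$ is an integer and $t(\omega(G-S)+1)-1 \le |S|$ combined with integrality) upgrades this to $|S|\ge t(\omega(G-S)+1)$. I expect the cleanest route is: from $\tau(G)=t$ we get $|S|\ge t\cdot\omega(G-S)$; if equality fails then $|S|\ge t\cdot\omega(G-S)+$ (something), and if equality holds then $S$ is a tough set not containing $u,v$, and adjoining $u$ gives a strictly larger $\omega$, contradicting that $S$ was already optimal unless $|S|+1 \ge t(\omega(G-S)+1)$; combined with integrality of $|S|$ and $t\le1/2$ this gives \eqref{cond:b}.

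The main obstacle I anticipate is the arithmetic bridging step in condition \eqref{cond:b}: translating the "$\tau(G)=t$" inequality for the enlarged cutset $S\cup\{u\}$ into the sharp bound $|S|\ge(\omega(G-S)+1)t$ requires carefully exploiting that $t\le 1/2$ and that component counts are integers, and one must be sure the enlarged set genuinely gains a component rather than merely not losing one. A secondary technical point is handling the degenerate case where one of $K,K'$ has size exactly $2$: I should verify the alternate path and the component-counting argument still go through when the "third vertex" must be taken from the larger clique. Once these are pinned down, Theorem \ref{th:condition} immediately yields that $G$ is not minimally $t$-tough.
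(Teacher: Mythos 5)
Your setup (choosing $u,v$ in the intersection of two maximal cliques joined by a weight-$\ge 2$ clique-tree edge, and verifying condition \eqref{cond:a} via a third vertex of $Q_1$ or $Q_2$) matches the paper and is fine. The genuine gap is in condition \eqref{cond:b}, and it is not just a matter of ``pinning down arithmetic'': your proposed mechanism cannot work. First, the inequality $\omega(G-S-u)\ge\omega(G-S)+1$ is not guaranteed. Writing $C_u,C_v$ for the components of $u$ and $v$ in $G-e-S$, one only gets $\omega(G-S-u)=\omega(G-e-S-u)\ge\omega(G-e-S)-1\ge\omega(G-S)$, with equality possible when $C_u=\{u\}$; so adjoining $u$ may gain nothing. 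Second, even granting that inequality, applying $t$-toughness to $S\cup\{u\}$ yields only $|S|+1\ge t\bigl(\omega(G-S)+1\bigr)$, i.e.\ $|S|\ge t\bigl(\omega(G-S)+1\bigr)-1$, which is short of the target by $1-t\ge 1/2$. Integrality of $|S|$ does not close this: for $t=1/2$ and $\omega(G-S)=4$ you would get $|S|\ge 1.5$, hence $|S|\ge 2$, while condition \eqref{cond:b} demands $|S|\ge 2.5$. Your fallback (``if $S$ is a tough set, adjoining $u$ contradicts optimality'') is also not a contradiction for $t<1$, since $|S|$ and $\omega$ both increase by one and the ratio moves the wrong way.

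The missing idea is to extract the extra additive $t$ from a vertex that is \emph{forced into} $S$ rather than from enlarging $S$. The paper first chooses the weight-$\ge 2$ edge of the clique tree so that one side of the tree carries only weight-$1$ edges, names the corresponding clique $Q_1$, and then fixes $x\in Q_1\setminus Q_2$; since $x$ is a common neighbor of $u$ and $v$ it lies in $S$. Chordality plus that choice of $Q_1$ force every neighbor of $x$ outside $Q_1$ to be separated from $u$ by $x$, whence $\omega(G-x)\le 1/t$. One then splits on whether $S\setminus\{x\}$ is a cutset: if it is, $|S|=|S\setminus\{x\}|+1\ge t\,\omega(G-S\setminus\{x\})+1$ combined with $\omega(G-S)\le\omega(G-S\setminus\{x\})+\omega(G-x)-1$ gives the bound; if it is not, one shows $\omega(G-S)\le\omega(G-x)$ and uses that $S$ also contains a common neighbor in $Q_2\setminus Q_1$, so $|S|\ge 2\ge \omega(G-x)t+1$. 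Without some version of this localized analysis around a forced vertex of $S$, condition \eqref{cond:b} does not follow.
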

\begin{proof}
	Since the toughness of $G$ is $t$ and $t\le 1/2$, it is clear that the graph is not complete, thus we can use Theorem~\ref{th:condition} to prove that the graph is not minimally $t$-tough. By the assumption, a clique tree of $G$ contains an edge that has weight at least $2$. This means that $G$ contains two cliques, $Q_1$ and $Q_2$, that share at least 2 vertices. If there is more than one edge in the clique tree having weight at least $2$, then we have to choose a suitable one. Clearly, removing any edge of the clique tree will separate the clique tree into two components. Pick an edge with weight at least $2$ so that  one of the components  contains only edges of weight 1, and let  $Q_1$ be the clique that corresponds to the vertex in this component. Notice that this implies that if another clique contains a vertex of $Q_1-Q_2$, then it cannot contain any other vertex of $Q_1$. 
	
	Now let $u$ and $v$ be two vertices in $Q_1\cap Q_2$. Since $u$ and $v$ belong to $Q_1$, they are connected by an edge, denote it by $e$. It will be shown that the conditions of Theorem~\ref{th:condition} are satisfied for $e=uv$.
	
	All vertices of the two cliques $Q_1$ and $Q_2$ are adjacent to both $u$ and $v$. Since these cliques are different, there must be a vertex in $Q_1-Q_2$ which is a common neighbor of $u$ and $v$. This gives a path between $u$ and $v$, so together with the $uv$ edge there are two paths between them.
	Since $2t+1\le 2$ follows from our assumption, condition \ref{cond:a}~of Theorem~\ref{th:condition} is satisfied.
	Notice that there must also be a vertex also in $Q_2-Q_1$, this will be used later.

	To show that condition \ref{cond:b} holds as well, consider an arbitrary cutset  $S$  in $G$ that is also a $u$-$v$ cutset in $G-e$. Now that all common neighbors of $u$ and $v$ must belong to $S$. Let $x\in Q_1-Q_2 \subseteq N(u)\cup N(v)$ be an arbitrary element. Thus  $x\in S$ holds. 
	
	We claim that $x$ cannot have a neighbor $y$ outside of $Q_1$ that is connected to $u$ by a path avoiding $x$. Suppose that $yy_1y_2\dots y_m$ is such a path with $y_m=u$. Let $k$ be the smallest index for which $y_k\in Q_1$; such an index exists since $y_m=u\in Q_1$. Since $x\in Q_1$ holds, $yy_1y_2\ldots y_kx$ is a cycle, which must be triangulated because $G$ is chordal. One of these triangles must contain the edge $xy_k$, let the third vertex be $y_{k'}$ with $k'<k$. This vertex is not in $Q_1$ by the choice of $k$. There is a maximal clique containing the triangle $xy_ky_{k'}$, however, this contradicts the choice of $Q_1$.
	
	The above claim implies that either all neighbors of $x$ belong to $Q_1$ or $x$ is a cut-vertex (that separates $u$ from some neighbors of $x$). In the first case $\omega(G-x)t\le \omega(G-x)=1$ holds since $t\le 1/2$.  In the second case, $\omega(G-x)t\le 1$ holds since $G$ is $t$-tough. So in both cases we have $\omega(G-x)\le 1/t$.
	
	Let $S'= S-\{x\}$.
	
	\medskip
	
	\noindent\textit{Case 1}: If $S'$ is a cutset in $G$, then $|S'|\geq \omega(G-S')t$ must hold since $G$ is $t$-tough. Consider the components of $G-S'$. The vertex $x$ is in one of these components, so when we delete $x$ as well, then this component will fall apart into at most $\omega(G-x)$ components. This follows from the observation that  by the above claims the neighborhood of $x$ spans disjoint cliques, so it is impossible to have a path between two such cliques that avoids $x$. Therefore we have
	\[\omega(G-S)\le \omega(G-S')+\omega(G-x)-1\le\omega(G-S')+ \frac{1}{t}-1,\]
	\[\omega(G-S)t+t\le \omega(G-S')t+1.\]
		This implies that
	\[|S|=|S'|+1\geq\omega(G-S')t+1\geq\omega(G-S)t+t=(\omega(G-S)+1)t.\]
	Thus  condition  \ref{cond:b}  of Theorem~\ref{th:condition} is satisfied in this case.
	\medskip
	
	\noindent\textit{Case 2}: If $S-{x}$ is not a cutset, then we prove that $\omega(G-S)\le\omega(G-x)$. It is easy to see that it is implied by the following claim.
	\begin{claim}
		If $S-x$ is not a cutset and two vertices of $G-S$ are in the same component of $G-x$, then they are also in the same component of $G-S$.
	\end{claim}
\begin{proof}
	Let $C_0,C_1,\ldots$ denote the components of $G-x$. Since $u\notin S$, one of the components must contain $u$, let $C_0$ be this component.
We have shown before that the neighbors of $x$ in $C_0$ must belong to $Q_1$. If there are no other neighbors of $x$ then the only component is $C_0$, otherwise there are at least two components. Now let $p$ and $q$ be two vertices of a component of $G-x$ and suppose to the contrary that they belong to different components in $G-S$.

	\vspace{2mm}\noindent\textit{Subcase (i)}: $p,q\in C_0$.
	
Since $S-x$ is not a cutset, this means that there exists a $p$-$q$ path avoiding $S-x$, thus it must contain $x$.  Let $p,\ldots,x^-,x,x^+,\ldots,q$ be such a path.
	Clearly all vertices of this path must be in $C_0$ because of the properties of $x$. This implies that  $x^-,x^+\in Q_1$. Thus $x^-,x^+$ are connected with an edge, so $p,\ldots,x^-,x^+,\ldots,q$ is a $p$-$q$ path that does not contain $x$ and avoids $S-x$, so it avoids $S$, a contradiction.

	\vspace{2mm}\noindent\textit{Subcase (ii)}: $p,q\in C_i$ for some $i>0$.
	
	The choice of $Q_1$ implies that $C_i$ is a chordal graph that satisfies the conditions of Lemma~\ref{claim1}.
	If $p$ and $q$ belong to the same clique, then they belong to the same component of both $G-S$, a contradiction. If they belong to different cliques, then any path connecting them must contain a cut-vertex which belongs to $S-\{x\}$. This implies that $S-\{x\}$ is a cutset, a contradiction.
\end{proof}
	 Now we complete the proof of the lemma. We noticed before that $S$ must contain all common neighbors of $u$ and $v$, and that there exists a common neighbor other than $x$. Thus $S-x$ is not empty. Hence, using  that $\omega(G-x)t\le 1$ and $\omega(G-S)\le\omega(G-x)$, we have
	\[|S|=1+|S-\{x\}|\geq\omega(G-x)t+1\geq\omega(G-S)t+t=(\omega(G-S)+1)t.\]
	
	So  condition  \ref{cond:b} of Theorem~\ref{th:condition} is satisfied in this case, too.
\end{proof}

	 \begin{lem}\label{lem:4} Let $G$ be a chordal graph so that in every clique tree all the edges have weight $1$ and there exist a clique of size at least $4$ in $G$.  This implies that $G$ is not minimally $t$-tough.
	\end{lem}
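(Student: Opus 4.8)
The plan is to apply Theorem~\ref{th:condition} to an edge $e=uv$ chosen inside a maximal clique of size at least $4$. Note first that $G$ is connected (otherwise $\tau(G)=0$) and not complete (otherwise $\tau(G)=\infty\neq t$), so Theorem~\ref{th:condition} applies. Fix a maximal clique $Q$ with $|Q|\ge 4$, pick any two vertices $u,v\in Q$ and set $e=uv$. Since $|Q|\ge 4$ there are two further vertices $w_1,w_2\in Q\setminus\{u,v\}$, and the three $u$-$v$ paths $uv$, $uw_1v$, $uw_2v$ are internally vertex-disjoint; as $2t+1\le 2\le 3$, condition~\ref{cond:a} of Theorem~\ref{th:condition} holds, and all the work is in condition~\ref{cond:b}.

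Before attacking condition~\ref{cond:b} I would record the structural consequence of the weight-$1$ hypothesis (the same phenomenon used in Lemma~\ref{lem:aa}, but now available at \emph{every} vertex and without any extremal choice). Since all clique-tree edges have weight $1$, any two maximal cliques of $G$ meet in at most one vertex. Hence, for a vertex $x$, if $M_1,\dots,M_p$ are the maximal cliques containing $x$, their ``petals'' $M_i\setminus\{x\}$ are pairwise disjoint and $N(x)=\bigcup_{i=1}^{p}(M_i\setminus\{x\})$. A shortest-path argument together with chordality shows that no path avoiding $x$ can join a vertex of $M_i\setminus\{x\}$ to a vertex of $M_j\setminus\{x\}$ with $i\neq j$: such a path together with $x$ would form a cycle whose triangulation forces a triangle through $x$ contained in two distinct maximal cliques, contradicting weight $1$. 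Since moreover each petal is a clique, hence connected, distinct components of $G-x$ contain distinct petals, and, $G$ being connected, each component of $G-x$ contains a neighbour of $x$ and thus exactly one petal; so $\omega(G-x)=p$. Applying $t$-toughness to the cutset $\{x\}$ when $x$ is a cut-vertex, and using $\omega(G-x)=1\le 1/t$ when $x$ is simplicial, gives
\[
\omega(G-x)\ \le\ 1/t \qquad\text{for every vertex }x .
\]

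Next I would prove the estimate
\[
\omega(G-S)\ \le\ 1+\sum_{y\in S}\bigl(\omega(G-y)-1\bigr)\ \le\ 1+|S|\,(1/t-1)
\qquad\text{for every }S\subseteq V(G),
\]
the second inequality being immediate from the previous display. For the first, remove the vertices of $S$ one at a time: if $x$ lies in the component $C$ of $G-(S\setminus\{x\})$, then each component of $C-x$ contains a neighbour of $x$ (as $C$ is connected), hence meets a petal of $x$; being contained in a single component of $G-x$ it meets exactly one petal, and distinct components of $C-x$ meet distinct petals (two vertices in the same petal are adjacent, hence in the same component of $C-x$); therefore $C-x$ has at most $\omega(G-x)$ components and
\[
\omega(G-S)\ \le\ \omega\bigl(G-(S\setminus\{x\})\bigr)+\omega(G-x)-1 .
\]
Iterating from $\omega(G)=1$ gives the claim.

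Finally, let $S$ be any cutset of $G$ that is also a $u$-$v$ cutset of $G-e$. Every common neighbour of $u$ and $v$ lies in $S$; in particular $Q\setminus\{u,v\}\subseteq S$, so $|S|\ge|Q|-2\ge 2$. Then
\[
(\omega(G-S)+1)\,t\ \le\ \bigl(2+|S|\,(1/t-1)\bigr)t\ =\ 2t+|S|(1-t)\ \le\ |S|,
\]
the last step being $2t\le|S|\,t$, which holds precisely because $|S|\ge 2$. Thus condition~\ref{cond:b} holds, and Theorem~\ref{th:condition} shows that $G$ is not minimally $t$-tough. The hypothesis $|Q|\ge 4$ enters only in this last line: for a triangle one would get only $|S|\ge 1$ and the final inequality would fail, which is exactly why TT-graphs (whose maximal cliques other than edges are triangles) can be minimally tough. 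I expect the main obstacle to be the structural claim and, more precisely, verifying that it survives the deletion of an arbitrary vertex set, so that the one-vertex-at-a-time estimate telescopes cleanly; once that is in place the rest is the short computation above.
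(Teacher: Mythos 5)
Your proof is correct, and for condition~\ref{cond:b} it takes a genuinely different route from the paper's. The paper peels off a single vertex $x\in Q\setminus\{u,v\}$, shows $\omega(G-x)\le 1/t$, and then splits into cases according to whether $S-\{x\}$ is still a cutset, applying the $t$-toughness of $G$ to $S-\{x\}$ in one case and a separate connectivity claim in the other (its proof of this lemma is essentially a pointer: ``repeat the argument of Lemma~\ref{lem:aa}''). You instead observe that under the all-weights-$1$ hypothesis the petal structure --- hence the bound $\omega(G-x)\le 1/t$ and the fact that deleting $x$ splits its component into at most $\omega(G-x)$ pieces --- is available at \emph{every} vertex, not just at one extremally chosen clique as in Lemma~\ref{lem:aa}; this lets you telescope the one-vertex deletion estimate over all of $S$ to get $\omega(G-S)\le 1+|S|(1/t-1)$, after which condition~\ref{cond:b} reduces to $|S|\ge|Q|-2\ge 2$. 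Your version avoids the cutset-or-not case analysis entirely, uses $t$-toughness only on singletons, and is self-contained where the paper's argument leans on a lemma proved in a slightly different setting; it also isolates cleanly why $|Q|\ge 4$ is needed (only through $|S|\ge 2$). What the paper's one-vertex peeling buys is that it works when only a single vertex of $S$ is known to have the petal structure, which is the situation in Lemma~\ref{lem:aa} itself; your telescoping needs the structure at all of $S$, which holds here but not there. The steps you flag as the main obstacle --- that the petal bound survives deletion of an arbitrary vertex set so the estimate telescopes --- are verified correctly in your second display, since the petals are defined in $G$ and the three facts used (they partition $N_G(x)$, each is a clique, and no two lie in one component of $G-x$) are unaffected by deleting other vertices.
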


\begin{proof} 
Let $e=uv$ be an arbitrary edge of a clique $Q$ of size at least $4$. 

Condition  \ref{cond:a} of Theorem~\ref{th:condition} is clearly satisfied since $2t+1\le 2$ and there are two paths of length 2 between $u$ and $v$ through two other vertices of $Q$ (and there is a third path, the $uv$ edge).
To show that condition \ref{cond:b} holds as well, consider an arbitrary cutset  $S$  in $G$ that is also a $u$-$v$ cutset in $G-e$. Clearly, $Q-\{u,v\}\subseteq S$. 

  Let $x$ be an arbitrary vertex in  $Q-\{u,v\}$ and $S'= S-{x}$. Now $S'$ is not empty since $|Q|\ge 4$. Now we can repeat the same argument as in the proof of Lemma \ref{lem:aa}, which implies that Condition  \ref{cond:a} of Theorem~\ref{th:condition} holds in this case, too.
\end{proof}

In the introduction, the modified degree was defined for TT-graphs. However, it can be easily extended for any graph in which each vertex is either simplicial vertex or a cut-vertex: 

\begin{defn}\label{def:md}
	If $G$ is graph in which each vertex is either simplicial vertex or a cut-vertex, then the {\em modified degree} of a vertex $v$, denoted by $md(v)$, is the number of components in $G-v$. Let  $\mu(G)$ denote the maximum modified degree in $G$.
\end{defn}

For these graphs it is easy to determine the toughness.

\begin{lem}\label{lem:mod}
	If $G$ is a connected, noncomplete graph in which each vertex is either a simplicial vertex or a cut-vertex, then $\tau(G)=1/\mu(G)$.
\end{lem}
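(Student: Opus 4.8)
The plan is to prove the two inequalities $\tau(G) \le 1/\mu(G)$ and $\tau(G) \ge 1/\mu(G)$ separately. For the upper bound, let $v$ be a vertex with $md(v) = \mu(G)$. Since $G$ is noncomplete and connected, and $v$ achieves the maximum modified degree, $v$ cannot be simplicial (a simplicial vertex $w$ in a noncomplete connected graph has $md(w) = 1$, and $\mu(G) \ge 2$ because some cut-vertex exists — otherwise every vertex would be simplicial, forcing $G$ to be complete). Hence $v$ is a cut-vertex with $\omega(G - v) = \mu(G)$, so $\{v\}$ is a cutset witnessing $|\{v\}| = 1 \ge \tau(G)\cdot \omega(G-v) = \tau(G)\cdot\mu(G)$, giving $\tau(G) \le 1/\mu(G)$.

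For the lower bound, I must show that every cutset $S$ satisfies $|S| \ge \omega(G-S)/\mu(G)$, equivalently $\omega(G - S) \le |S|\cdot \mu(G)$. The natural approach is induction on $|S|$, removing the vertices of $S$ one at a time. The base case $|S| = 1$ is immediate: a single cut-vertex $s$ has $\omega(G - s) = md(s) \le \mu(G)$. For the inductive step, write $S = S' \cup \{x\}$ with $x \in S$; I would want to argue that deleting $x$ from any one component $C$ of $G - S'$ splits $C$ into at most $md(x)$ pieces — more precisely, that the total increase $\omega(G - S) - \omega(G - S') \le \mu(G) - 1$. The key structural fact to exploit is that since every vertex is simplicial or a cut-vertex, the neighborhood of a cut-vertex $x$ "spans disjoint cliques" within its component, exactly as used in the proof of Lemma~\ref{lem:aa}: no path between two distinct components of $G - x$ can be rerouted to avoid $x$. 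Combined with the observation that when $x$ is simplicial its removal from $C$ leaves $C$ connected (or empty), this should yield $\omega(G-S) \le \omega(G-S') + \mu(G) - 1 \le |S'|\mu(G) + \mu(G) - 1 < |S|\mu(G)$ — in fact with room to spare, which is more than enough.

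The main obstacle I anticipate is handling the bookkeeping when $x$ is a cut-vertex but the components of $G - x$ are distributed across several components of $G - S'$, or when $x$ lies in a component $C$ together with other vertices of $S$: I need that the number of new components created inside $C$ is at most (number of components of $G - x$ meeting $C$) and that these are "charged" correctly. A cleaner route may be to avoid the one-at-a-time induction and instead argue directly: show that every component $D$ of $G - S$ has a vertex $s_D \in S$ adjacent to $D$, and bound, for each $s \in S$, the number of components $D$ with $s_D = s$ by $md(s) \le \mu(G)$; this requires checking that distinct components adjacent to the same cut-vertex $s$ lie in distinct components of $G - s$, which again follows from the disjoint-clique structure of $N(s)$. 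Either way, the heart of the argument is the same local claim about cut-vertices that already appeared in Lemma~\ref{lem:aa}, so I would state and prove that claim once in full generality and then feed it into the counting. I would also double-check the edge case where $G - S$ has an isolated vertex that is simplicial, to make sure it is adjacent into $S$ and gets charged.
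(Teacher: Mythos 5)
Your architecture matches the paper's: the upper bound via a vertex of maximum modified degree is fine (and your justification that such a vertex must be a cut-vertex is correct), and for the lower bound both you and the paper peel off one vertex $x$ of the cutset at a time and try to bound the number of newly created components by $md(x)-1\le\mu(G)-1$. The genuine gap is precisely the step you flag as the crux: the claim that deleting a cut-vertex $x$ from a component $C$ of $G-S'$ splits $C$ into at most $md(x)$ pieces (equivalently, in your ``cleaner route,'' that distinct components of $G-S$ charged to the same $s\in S$ lie in distinct components of $G-s$). You say this follows from the disjoint-clique structure of $N(x)$ ``exactly as used in the proof of Lemma~\ref{lem:aa},'' but there that structure was derived from chordality together with the weight-one clique-tree condition, neither of which is assumed here; it does \emph{not} follow from ``every vertex is simplicial or a cut-vertex.'' Concretely, attach a pendant vertex to each vertex of a $4$-cycle: every vertex is simplicial or a cut-vertex and $\mu(G)=2$, yet if $x,x'$ are opposite cycle vertices, removing $x$ from the component of $G-x'$ containing it produces three pieces, not $md(x)=2$, so the inequality $\omega(G-S)\le\omega(G-S')+\mu(G)-1$ fails for $S=\{x,x'\}$.

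Worse, the statement is false at this level of generality, so no argument from the stated hypotheses can close the gap: attach a pendant to each of the five vertices of $K_{2,3}$; again every vertex is simplicial or a cut-vertex and $\mu(G)=2$, but deleting the two vertices of the part of size two leaves five components, so $\tau(G)\le 2/5<1/2=1/\mu(G)$. (The paper's own proof asserts the same inequality without justification, so this is a defect of the lemma as stated, not only of your proposal.) The repair is an extra structural hypothesis such as ``every block of $G$ is a clique,'' which holds for every graph to which the lemma is actually applied in the paper (chordal graphs all of whose clique-tree edges have weight $1$, TT-graphs, and TT-graphs minus a triangle edge). Under that hypothesis, two neighbors $y,z$ of a cut-vertex $x$ lying in the same component of $G-x$ lie on a common cycle through $x$, hence in a common block, hence are adjacent; that is exactly the ``disjoint cliques in distinct components of $G-x$'' property your charging needs, and with it either of your two counting schemes goes through.
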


\begin{proof} If $md(v)=\mu(G)$, then clearly the number of components of $G-v$ is $\mu(G)$, so $\tau(G)\le 1/ \mu(G)$.
	
Let $S$ be a tough set. We claim that $|S|=1$. If  $|S|>1$, then let $x\in S$ be a vertex having the smallest modified degree, and let $S'=S-x$. If $x$ is a simplicial vertex, then $\omega(G-S')=\omega(G-S)$, which contradicts the assumption that $S$ is a tough set. Thus $x$ is a cut-vertex. Consider the components of $G-S'$. The vertex $x$ belongs to one of these components. Since $x$ is a cut-vertex, we have $\omega(G-S)\le \omega(G-S')+\mu(G)-1$, because  deleting $x$ besides $S'$ would give new $\mu(G)$ components instead of this old component. Now
\begin{multline*}
\tau(G)\omega(G-S')\ge \tau(G)\omega(G-S)-\tau(G)(\mu(G)-1)=\\
|S|-\mu(G)\tau(G)+\tau(G)> |S|-1=|S'|,\end{multline*}
since $\mu(G)\tau(G)<1$ by our first claim, and $\tau(G)>0$.  On the other hand, since $\tau(G)$ is the toughness, we have $\omega(G-S')\tau(G)\le |S'|$, a contradiction. 

Hence, it is enough to consider the cutsets of size one to determine the toughness, and it is clear that the most number of components is obtained by deleting a vertex with maximum modified degree.
\end{proof}

 \begin{lem}\label{lem:tt} Let $G$ be a connected, noncomplete chordal graph so that in every clique tree all the edges have weight $1$ and all cliques have   size at most $3$ in $G$.  If $G$ is  minimally $t$-tough, then  it is a TT-graph and $t=1/\mu(G)$.
	\end{lem}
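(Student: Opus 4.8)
The plan is to dispose of the toughness statement immediately and then reduce everything to a single structural claim proved via Theorem~\ref{th:condition}. By Lemma~\ref{claim1} every vertex of $G$ is simplicial or a cut-vertex, so Lemma~\ref{lem:mod} applies and gives $t=\tau(G)=1/\mu(G)$ at once (here $\mu(G)\ge 2$, since $G$ is connected and not complete). So only ``$G$ is a TT-graph'' remains. First I would record the shape of $G$: by the weight-$1$ hypothesis no two maximal cliques meet in two or more vertices (as in the proof of Lemma~\ref{claim1}), and since all cliques have size $\le 3$ this forces every block of $G$ to be a $K_2$ or a $K_3$ (a block with $\ge 4$ vertices would be a non-complete $2$-connected chordal graph, hence would contain two maximal cliques sharing $\ge 2$ vertices). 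Consequently $md(v)=\omega(G-v)$ equals the number of blocks of $G$ containing $v$, and this block count cannot grow when one deletes a vertex and passes to a resulting component; in particular $\mu(C)\le\mu(G)$ and all blocks of $C$ again have size $\le 3$, for every component $C$ of every $G-x$.

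Next I would build the candidate tree $T$: replace each triangle $\Delta$ of $G$ by a claw, i.e.\ add a new vertex $y_\Delta$ joined to the three vertices of $\Delta$ and delete the three edges of $\Delta$, leaving every triangle-free edge of $G$ alone. Then $T$ is connected with $|E(T)|=|E(G)|$ and $|V(T)|=|V(G)|+(\text{number of triangles of }G)$; since $G$ is chordal each of its cycles splits along a chord into triangles, and the triangles of $G$ are pairwise edge-disjoint, hence independent, so the number of independent cycles of $G$ equals the number of triangles, i.e.\ $|E(G)|=|V(G)|+(\text{number of triangles})-1$. Thus $|E(T)|=|V(T)|-1$ and $T$ is a tree, with $d_T(v)=md(v)$ for $v\in V(G)$ and $d_T(y_\Delta)=3$, and applying Definition~\ref{def:TT} to $(T,Y)$ with $Y=\{y_\Delta:\Delta\text{ a triangle of }G\}$ recovers $G$. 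Since $Y$ is independent and consists of degree-$3$ vertices, $G$ is a TT-graph as soon as we establish the following.

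\medskip
\noindent\textbf{Claim.} If $G$ is minimally $t$-tough, then every vertex of $G$ lying in a triangle has modified degree $\mu(G)$.
\medskip

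\noindent Granting the Claim, a short case check finishes: if $G$ has no triangle then $G=T$ is a tree, hence a TT-graph; if $G$ has a triangle and $\mu(G)=2$ then $\Delta(T)=3$, $Y$ is exactly the set of degree-$3$ vertices of $T$, and each neighbour of a vertex of $Y$ has $T$-degree $md=2$, so condition~(a) of Definition~\ref{def:TT} holds; and if $\mu(G)\ge 3$ then $\Delta(T)=\mu(G)\ge 3$, $Y$ is a set of degree-$3$ vertices, and each neighbour of a vertex of $Y$ has $T$-degree $\mu(G)=\Delta(T)$, so condition~(b) holds.

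To prove the Claim I would argue by contradiction using Theorem~\ref{th:condition}. Suppose some triangle $\{a,b,c\}$ has a vertex which, after renaming, we call $c$, with $md(c)<\mu(G)=:m$, and take $e=ab$, the edge of the triangle not incident to $c$. Condition~\ref{cond:a} is immediate: the edge $ab$ and the path $a,c,b$ give two internally disjoint $a$-$b$ paths, and $2\ge 2t+1$ since $t\le 1/2$. For condition~\ref{cond:b}, let $S$ be a cutset of $G$ that is also an $a$-$b$ cutset of $G-ab$; then $a,b\notin S$, and the path $a,c,b$ in $G-ab$ forces $c\in S$, so with $S'=S\setminus\{c\}$ we have $G-S=(G-c)-S'$. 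The components $C_0,\dots,C_{k-1}$ of $G-c$ satisfy $k=md(c)\le m-1$, and $\omega(G-S)=\sum_i\omega(C_i-S_i')$ with $S_i'=S'\cap C_i$. Each $C_i$ is complete, or else connected with all blocks $K_2$ or $K_3$ (so every vertex of $C_i$ is simplicial or a cut-vertex) and $\mu(C_i)\le m$, hence $(1/m)$-tough by Lemma~\ref{lem:mod}; in either case $\omega(C_i-S_i')\le m|S_i'|$ when $S_i'\ne\emptyset$ and $\omega(C_i-S_i')=1$ when $S_i'=\emptyset$. If $S'=\emptyset$ then $\omega(G-S)=k\le m-1$; if $S'\ne\emptyset$ then at most $k-1\le m-2$ of the $C_i$ have empty $S_i'$, so $\omega(G-S)\le(m-2)+m|S'|$. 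Either way $\omega(G-S)+1\le m|S|$, i.e.\ $|S|\ge(\omega(G-S)+1)t$, so by Theorem~\ref{th:condition} $G$ would not be minimally $t$-tough---a contradiction, which proves the Claim.

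The step I expect to be the main obstacle is this last one, for two reasons. The edge $e$ must be chosen so that the \emph{deficient} vertex of the triangle is its third vertex: deleting one of the other two edges genuinely fails (for a triangle with a pendant path attached at one of its vertices one is left with the single-vertex cutset $S=\{c\}$, which violates condition~\ref{cond:b}). And the bound in condition~\ref{cond:b} relies on the fact---special to graphs all of whose blocks are $K_2$'s and $K_3$'s---that deleting $c$ cannot inflate the modified degree of any surviving vertex, which is precisely what keeps every $C_i$ at least $(1/m)$-tough; without this the number of components of $G-S$ could be too large. Everything else is bookkeeping.
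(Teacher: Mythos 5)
Your proposal is correct, and its overall skeleton matches the paper's: deduce $\tau(G)=1/\mu(G)$ from Lemmas~\ref{claim1} and~\ref{lem:mod}, prove that every vertex lying in a triangle has modified degree $\mu(G)$, and then recover the underlying tree by replacing each triangle with a claw and checking conditions (a)/(b) of Definition~\ref{def:TT}. The genuine difference is in how the central Claim is established. You prove it by invoking Theorem~\ref{th:condition} for the edge $e=ab$ opposite the deficient vertex $c$: you verify condition~\ref{cond:a} trivially and condition~\ref{cond:b} by decomposing $G-S$ along the components of $G-c$, using that all blocks are $K_2$ or $K_3$ to see that each component is again $(1/\mu(G))$-tough. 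This works (your bounds $\omega(G-S)\le (k-1)+m|S'|$ and the $S'=\emptyset$ case both close correctly), but it is considerably heavier than what the paper does: the paper simply observes that $G'=G-e$ still has every vertex simplicial or a cut-vertex, that $md_{G'}(c)=md_G(c)+1\le\mu(G)$ while no other modified degree changes, hence $\mu(G')=\mu(G)$ and $\tau(G')=\tau(G)$ by Lemma~\ref{lem:mod} -- directly contradicting minimal toughness without ever touching Theorem~\ref{th:condition}. What your route buys is independence from Lemma~\ref{lem:mod} being applicable to $G-e$ (you only apply it to induced subgraphs of $G$ itself), at the cost of the component-counting bookkeeping; the paper's route buys a four-line proof. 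Your additional verifications -- that all blocks are $K_2$ or $K_3$, and the edge/vertex count showing the claw-replacement really yields a tree -- are details the paper leaves implicit, and they are sound. (One cosmetic point you share with the paper: when $G$ is triangle-free it is a tree, and a path then has maximum degree $2$, which does not literally fit the ``maximum degree $\Delta\ge 3$'' clause of Definition~\ref{def:TT}; this is a defect of the definition rather than of either proof.)
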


\begin{proof} In Claim~\ref{claim1} we showed that  each vertex of $G$ is either a simplicial vertex or  a cut-vertex, so we can use  Definition~\ref{def:md}.
	
We show that if  $G$ is  minimally $t$-tough and $v$ is contained in a triangle $Q$, then $md(v)=\mu(G)$. Suppose on the contrary that $md(v)\le\mu(G)-1$. Let $e=uw$ be the edge of $Q$ that is not incident to $v$. Consider now $G'=G-e$. If $u$ or $w$ was a cut-vertex in $G$, then the same will hold in $G'$. If either one of them was a simplicial vertex in $G$, then it means that it had degree 2 in $G$. Thus in $G'$ it will have degree 1, implying that it is still a simplicial vertex. So  all vertices of $G'$ are still either simplicial or cut-vertices since for other vertices nothing will change. 

Notice that $md_{G'}(v)=md_{G}(v)+1\le\mu(G)$. Also, $\mu(G')=\mu(G)$, since the modified degree  does not change for any other vertex besides $v$. Thus $\tau(G')=\tau(G)$ holds by Lemma \ref{lem:mod}, implying a contradiction.


When $\mu(G)\ge 3$, then if we replace each triangle with a claw $K_{1,3}$ so that the leaves of the claw correspond to the vertices of the triangle and the center is a news vertex, then we clearly obtain a tree with maximum degree $\mu(G)$. Also, the leaves of each claw have maximum degree in the tree.  So reversing the replacements show that $G$ can be obtained from this tree as described in case (b) of  the definition of TT-graphs. 

In the special case when $\mu(G)=2$,  replacing the triangles with a claw will result in a tree of maximum degree $3$, but all the neighbors of the degree $3$ vertices will have degree $2$. So reversing the replacements show that $G$ can be obtained from this tree as described in case (a) of  the definition of TT-graphs. Therefore we proved that $G$ is a TT-graph with toughness $1/\mu(G)$.
  \end{proof}

\begin{proof}[Proof of Theorem \ref{thm:main}] 
Let $G$ be a minimally $t$-tough, chordal graph with $t\le 1/2$.  Since $G$ is chordal and connected, it has a set of clique trees. By Lemma~\ref{lem:aa}, none of these clique trees contain an edge of weight at least $2$, and by Lemma~\ref{lem:4}, there is no clique of size at least $4$ in $G$. Thus the conditions of Lemma~\ref{lem:tt} are satisfied, so applying the lemma, we conclude that $G$ is a TT-graph.

To complete the proof, we show that any TT-graph $G$ is minimally $t$-tough. It is clear that in a TT-graph every vertex is either a leaf, and therefore simplicial, or a cut-vertex. Thus Lemma~\ref{lem:mod} can be applied, implying that $\tau(G)=1/\mu(G)$. It is also clear that TT-graphs have two kinds of edges: every edge is either a bridge or it is contained in a triangle. 

If an edge $e$ is a bridge, then $G-e$ is disconnected, thus its toughness is $0$. So the toughness of the graph clearly decreases by removing such an edge. 

Now let $e$ be an edge of a triangle $Q$. By the definition of TT-graphs, we know that all vertices of $Q$  have maximum modified degree $\mu(G)$. This implies that in $G-e$, the vertex of $Q$ not incident to $e$  has modified degree $\mu(G)+1$. Since every vertex of $G-e$ is still either simplicial  or  a cut-vertex, Lemma~\ref{lem:mod} shows that the toughness of  $G-e$ is $1/ (\mu(G)+1)< 1/ \mu(G)$. So the toughness of the graph decreases by removing this kind of edges, too.
			\end{proof}

	\section{Implication for interval graphs}
	\begin{defn}
		A graph  is called an interval graph if it is obtained from intervals on the real line in such a way that each interval represents a vertex and the intersection of any two intervals represent an edge. Such graphs are also known as the intersection graph of intervals.
	\end{defn}

An independent triple of vertices $x, y, z$ is called an \emph{asteroidal triple} (AT, for short) if between any pair in the triple there exists a path that avoids the neighborhood of the third vertex.  \emph{Caterpillars} are those trees for which removing the leaves produces a path.

\begin{thm}[\cite{t2}] A graph is an interval graph if and only if it is chordal and asteroidal triple-free.	
\end{thm}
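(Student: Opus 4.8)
The plan is to prove the two implications separately; the forward one is routine and the converse carries all the weight.

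\emph{Interval graph $\Rightarrow$ chordal and asteroidal triple-free.} Fix an interval representation $v \mapsto I_v$. For chordality, suppose $v_1 v_2 \cdots v_k$ with $k \ge 4$ were an induced cycle and let $j$ minimize $\sup I_{v_j}$; since $I_{v_{j-1}}$ and $I_{v_{j+1}}$ both meet $I_{v_j}$ and have right endpoint at least $\sup I_{v_j}$, both contain the point $\sup I_{v_j}$, so $v_{j-1}\sim v_{j+1}$, impossible in an induced cycle of length $\ge 4$. For AT-freeness, suppose $\{x,y,z\}$ is an asteroidal triple; independence forces $I_x,I_y,I_z$ pairwise disjoint, hence linearly ordered on the line, say $I_x$ left of $I_y$ left of $I_z$. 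An $x$--$z$ path is a chain of pairwise overlapping intervals starting strictly left of $I_y$ and ending strictly right of it; the first interval on the chain whose right endpoint reaches $\min I_y$ has a predecessor lying strictly left of $I_y$, so that interval itself contains $\min I_y\in I_y$ and the corresponding (internal) path vertex is adjacent to $y$ — contradicting that the path avoids $N[y]$. By symmetry all three pairs fail, so no asteroidal triple exists.

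\emph{Chordal and asteroidal triple-free $\Rightarrow$ interval graph.} Since $G$ is connected and chordal it has a clique tree, and I would split the argument into two steps. \textit{Step 2 (routine):} if $G$ has a clique tree that is a path $Q_1 - Q_2 - \cdots - Q_m$, then setting $I_v:=[\,l(v),r(v)\,]$ with $l(v)=\min\{i:v\in Q_i\}$ and $r(v)=\max\{i:v\in Q_i\}$ yields an interval representation of $G$: by the induced-subtree property $v$ lies in $Q_i$ for every $i\in[l(v),r(v)]$, and since every edge of $G$ lies in some maximal clique, $v\sim w$ holds iff some $Q_i$ contains both $v$ and $w$, which holds iff $I_v\cap I_w\ne\emptyset$. \textit{Step 1 (the heart of the matter):} every chordal, AT-free graph has a clique tree that is a path.

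For Step 1 I would argue by contradiction: among all clique trees of $G$ pick one, $T$, with the fewest leaves; if $T$ is not a path then some clique $Q$ has $\deg_T Q\ge 3$, and I claim this exposes an asteroidal triple. Take three components $T_1,T_2,T_3$ of $T-Q$ with $R_i$ the clique of $T_i$ adjacent to $Q$; these branches lie in distinct components of $G-Q$, so a vertex chosen from one branch outside $Q$ is non-adjacent to one chosen from another. In each branch let $x_i$ be the simplicial vertex of a suitable leaf clique $L_i$ of $T$ inside $T_i$ (a leaf clique of a clique tree always contains a vertex in no other maximal clique, and such a vertex lies outside $Q$), so $N[x_i]=L_i$ and $\{x_1,x_2,x_3\}$ is independent. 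The remaining task is, for each pair, to route a connecting path through $Q$ — which any $x_i$--$x_j$ path must use — that avoids $N[x_k]\cap Q$, the only part of $N[x_k]$ outside branch $k$; concretely one needs a vertex of $Q$ with a neighbor in $T_i$ and a vertex of $Q$ with a neighbor in $T_j$, both lying outside $N[x_k]$. Here the leaf-minimality of $T$ is exactly what makes such ``clean'' vertices of $Q$ available: if the $Q$-part of a branch were swallowed by the clique carrying $x_k$, one could re-attach that branch to $x_k$'s side of $T$ and strictly decrease the number of leaves, a contradiction. Completing this needs a short case split according to whether the branches are single cliques or deeper.

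\emph{Main obstacle.} The whole difficulty is Step 1, and within it the verification that a branching clique tree always yields an asteroidal triple: bookkeeping which vertices of the separating clique $Q$ are ``safe'' relative to the third vertex is genuinely delicate, and making the leaf-minimality re-attachment argument cover every configuration is the part I expect to fight with. This is precisely why Lekkerkerker and Boland \cite{t2} instead argue via the explicit finite list of minimal forbidden induced subgraphs for interval graphs, each of which visibly contains an asteroidal triple; that route trades the clique-tree bookkeeping for a longer but more mechanical structural case analysis. Either is acceptable here, and the clique-tree version has the advantage of reusing the machinery already set up in Section 1.
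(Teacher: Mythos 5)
This theorem is quoted in the paper from Lekkerkerker--Boland \cite{t2} without proof, so there is no in-paper argument to compare against; judging your attempt on its own terms, it has a genuine gap exactly where you say it does. The forward direction is correct and complete (the ``leftmost right endpoint'' chord argument and the ``first interval reaching $\min I_y$'' argument are both sound), and Step~2 of the converse is the standard consecutive-cliques representation and is fine. But Step~1 --- every chordal, AT-free graph admits a clique tree that is a path --- is the entire content of the hard direction, and you do not prove it: you only assert that a branching node $Q$ of a leaf-minimal clique tree forces an asteroidal triple, deferring ``a short case split.'' That case analysis is not short; it is essentially the whole of Lekkerkerker and Boland's (or Golumbic's) proof and runs to a page or more of careful work.

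Two concrete points where the sketch as written would fail. First, the path-construction step: to join $x_i$ to $x_j$ avoiding $N[x_k]=L_k$ you must cross $Q$ using vertices of $Q\setminus L_k$ that actually have neighbors on the branch-$i$ and branch-$j$ sides of your path, and you must also ensure the portions of the path inside branches $i$ and $j$ avoid $L_k\cap Q$ (vertices of $Q$ can reappear deep inside those branches by the induced-subtree property). Nothing in the proposal guarantees such vertices exist; $Q\cap R_i$ could in principle be contained in $L_k$. Second, the leaf-minimality argument meant to rescue this does not obviously do so: detaching a branch from $Q$ (which has degree $\ge 3$, so does not become a leaf) and re-attaching it elsewhere changes the leaf count only if the new attachment point was previously a leaf, and the re-attachment must also be shown to preserve the clique-intersection property. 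So the potential function and the exchange step both need to be redesigned, not just elaborated. Since the paper itself treats this as a black-box citation, the honest courses of action are either to cite \cite{t2} as the paper does, or to commit to writing out Step~1 in full.
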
	
\begin{cor}
	If $G$ is a minimally $t$-tough, interval graph with $t\le 1/2$, then $G$ is a caterpillar graph.
	\end{cor}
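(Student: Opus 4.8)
The plan is to combine the main theorem with the structure of asteroidal triples. By Theorem~\ref{thm:main}, a minimally $t$-tough chordal graph $G$ with $t\le 1/2$ is a TT-graph, so $G$ is built from an underlying tree $T$ of maximum degree $\Delta\ge 3$ by replacing some degree-$3$ vertices with triangles on their three neighbors. Since $G$ is also an interval graph, it contains no asteroidal triple. First I would argue that $G$ being AT-free forces strong restrictions on $T$: I claim $T$ must be a caterpillar, and moreover the triangle-creating operation can only occur in a controlled way along the spine.

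The key step is to show that if the underlying tree $T$ has a vertex $w$ of degree at least $3$ with the property that three of the branches at $w$ each contain an edge not incident to $w$ (equivalently, $w$ has three ``long enough'' branches), then $G$ has an asteroidal triple. Concretely, pick one vertex $x_i$ at distance $2$ from $w$ in each of three distinct branches (or, if a branch was turned into a triangle, pick a suitable vertex just past the triangle); then for any two of these three vertices the path between them in $G$ goes through $w$'s branch structure but avoids the closed neighborhood of the third, because the third vertex's neighbors all lie in its own branch. This is exactly the asteroidal triple condition, contradicting that $G$ is an interval graph. Hence every vertex of $T$ of degree $\ge 3$ has at most two branches containing an edge away from it, which is precisely the statement that deleting the leaves of $T$ leaves a path — i.e.\ $T$ is a caterpillar. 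Finally I would check that the triangle replacements preserve the caterpillar property: replacing a degree-$3$ spine vertex (with its third neighbor necessarily a leaf, by the caterpillar structure) by a triangle keeps the ``remove the leaves, get a path'' description intact, since the two new triangle edges among former neighbors do not create any new non-leaf branching. Therefore $G$ itself, viewed as a graph, is a caterpillar.

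I expect the main obstacle to be the bookkeeping around the triangle replacements: a degree-$3$ vertex of $T$ that gets replaced by a triangle is no longer a vertex of $G$, so I must phrase the asteroidal-triple argument directly in $G$ rather than in $T$, and I must be careful that the three ``representative'' vertices I choose are genuinely independent in $G$ and that the connecting paths really do avoid the relevant neighborhoods even in the presence of the extra triangle edges. A clean way to handle this is to observe that in a TT-graph $G$, for any cut-vertex $v$ the components of $G-v$ are exactly the ``branches'' at the corresponding tree vertex, and the neighbors of $v$ lying in a given branch induce a clique (a single vertex or a triangle edge); this lets me select the representatives one step beyond $N[v]$ in each branch and verify independence and the path conditions uniformly, whether or not triangles are present. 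The rest — translating ``at most two nontrivial branches at every branch vertex'' into ``caterpillar'' — is routine.
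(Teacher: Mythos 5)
There is a genuine gap in the final step, and it causes your conclusion to come out wrong in the case where triangles are present. The paper's proof has two parts: first it shows that a TT-graph that is AT-free can contain \emph{no} triangle at all, hence is a tree; only then does it invoke the subdivided-claw obstruction to conclude the tree is a caterpillar. The first part is the step you are missing. By Definition~\ref{def:TT}, every neighbor of a replaced vertex $y\in Y$ has degree $2$ (case (a)) or degree $\Delta\ge 3$ (case (b)) in the underlying tree --- in particular it is never a leaf. So each of the three vertices of any triangle of $G$ has at least one neighbor outside the triangle, these three outside neighbors are distinct, pairwise non-adjacent (they lie in different branches of the tree), and they form an asteroidal triple: the path between any two of them through the triangle edge avoids the closed neighborhood of the third. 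This is the ``net'' configuration, the standard minimal non-interval chordal graph. Hence a TT-graph that is an interval graph has no triangles and is a tree.

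Your proposal instead asserts that a triangle replacement at a degree-$3$ spine vertex ``with its third neighbor necessarily a leaf'' preserves the caterpillar property. This fails on two counts: the TT-construction explicitly forbids a leaf from being a neighbor of a vertex in $Y$, so the configuration you describe cannot occur; and even setting that aside, a graph containing a triangle is not a tree and therefore not a caterpillar in the sense the corollary uses (caterpillars are trees whose leaf-deletion yields a path), so ``$G$ is a caterpillar'' would simply be false if a triangle survived. Your AT argument for branch vertices of the tree with three long branches is fine and matches the paper's second step, but the proof is incomplete without first ruling out triangles via the net/AT argument above.
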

	
	\begin{proof} 
		Since every interval graph is chordal, by Theorem~\ref{thm:main}, $G$ must be a TT-graph. Now we show that $G$ is a tree. Suppose to the contrary that $G$ is not a tree, i.e. it contains a triangle. Each vertex in a triangle must have a distinct neighbor outside the triangle. These neighbors form an AT in the graph, a contradiction.  So $G$  is a tree. However, any tree that is not a caterpillar contains the AT subgraph which is obtained from the claw by subdividing each edge with a vertex. Thus $G$ must be a caterpillar.
		\end{proof}

\section*{Acknowledgment}	The authors are grateful to Kitti Varga for her invaluable help in improving the manuscript.
This research was supported by the Ministry of Innovation and
Technology and the National Research, Development and Innovation
Office within the Artificial Intelligence National Laboratory of Hungary. 	
	
	\bibliographystyle{amsplain}
	
\end{document}